\definecolor{marin}{rgb}   {0.,   0.3,   0.7} 
\definecolor{orange}{rgb}{1.0,0.5,0.0} 
\definecolor{red}{rgb}{0.8,0.2,0.2} 
\definecolor{green}{rgb}{0.3,0.7,0.3} 
\newtheorem{lemma}{Lemma}[section]
\newtheorem{theorem}[lemma]{Theorem}
\newtheorem{remark}[lemma]{Remark}
\numberwithin{equation}{section}
\newcommand{\N}{\mathbb{N}}
\newcommand{\R}{\mathbb{R}}
\newcommand{\C}{\mathbb{C}}
\newcommand{\T}{\mathbb{T}}
\newcommand{\Z}{\mathbb{Z}}
\newcommand{\Mc}{\mathcal{M}}
\newcommand{\Pc}{\mathcal{P}}
\newcommand{\Uc}{\mathcal{U}}
\newcommand{\Vc}{\mathcal{V}}
\newcommand{\Zc}{\mathcal{Z}}
\newcommand{\jb}{{{j}}}
\newcommand{\lb}{{{l}}}
\newcommand{\kb}{{{k}}}
\newcommand\bfomega{{\boldsymbol \omega}}
\newcommand{\Norm}[2]{\|#1\|\left.\vphantom{T_{j_0}^0}\!\!\right._{#2}}   
\DeclareMathOperator{\ImT}{Im}
\begin{document}

\title{Sobolev  stability of plane wave solutions to the cubic nonlinear Schr\"odinger equation on a torus}

\author{Erwan Faou\thanks{INRIA and ENS Cachan Bretagne, 
        Avenue Robert Schumann, 
        F-35170 Bruz, France
        ({\tt Erwan.Faou@inria.fr}).}
        \thanks{  D\'{e}partement de math\'{e}matiques et applications, 
          \'{E}cole normale sup\'{e}rieure,
          45 rue d'Ulm,
          F-75230 Paris Cedex 05, France.  }
        \and
        Ludwig Gauckler\thanks{Institut f\"ur Mathematik,
        Technische Universit\"at Berlin,
        Stra{\ss}e des 17.\ Juni 136,
        D-10623 Berlin, Germany
        ({\tt gauckler@math.tu-berlin.de}).}
        \and
        Christian Lubich\thanks{Mathematisches Institut,
        Universit\"at T\"ubingen,
        Auf der Morgenstelle 10,
        D-72076 T\"ubingen, Germany
        ({\tt lubich@na.uni-tuebingen.de}).}
}

\date{Version of 10 October 2012}

\maketitle

\begin{abstract}
It is shown that plane wave solutions to the cubic nonlinear Schr\"od\-inger equation on a torus behave orbitally stable under generic perturbations of the initial data that are small in a high-order Sobolev norm, over long times that extend to arbitrary negative powers of the smallness parameter. The perturbation stays small in the same Sobolev norm over such long times. The proof uses a Hamiltonian reduction and transformation and, alternatively, Birkhoff normal forms or modulated Fourier expansions in time.
\end{abstract}

\section{Introduction and statement of the result}

Consider the cubic nonlinear Schr\"odinger  equation (NLS) on the $d$-dimensional torus $\T^{d}=\R^d/(2\pi\Z)^d$, for arbitrary dimension $d\ge 1$,  in the defocusing ($\lambda=1$) or focusing ($\lambda=-1$) case,
\begin{equation}
\label{eq:nls}
i \partial_{t} u = -\Delta u + \lambda |u|^{2} u,\qquad \ x\in \T^d,\, t\in\R.
\end{equation}
For initial data made of a single Fourier mode, $u_*(x,0)=\rho e^{im\cdot x}$, the equation has the plane-wave solution
$u_*(x,t)=\rho e^{i(m\cdot x - \omega t)}$
with
$\omega=|m|^2 + \lambda \rho^2$.
We show that under {\em generic} perturbations of such initial data by functions with small $H^s$ Sobolev norm, for sufficiently large Sobolev exponent $s$, the solution remains essentially localized in the $m$th Fourier mode over very long times, and the perturbation remains small in the same $H^s$ norm.  For the precise formulation of the result we decompose the solution in the Fourier basis, $u(x,t) = \sum_{j \in \Z^d} u_j(t) e^{i j \cdot x}$. 

\begin{theorem}\label{theorem:main}
Let $\rho_0>0$ be such that 
$
1 + 2 \lambda \rho_0^2 > 0,
$
and let $N>1$ be fixed arbitrarily. There exist $s_0>0$,   $C\ge 1$   and a set of full measure $\mathcal{P}$ in the interval $(0,\rho_0]$ such that for every $s \geq s_0$ and  every $\rho \in \Pc$, there exists $\varepsilon_0>0$  such that for every $m \in \Z^d$ the following holds: 
if the initial data $u(\bullet,0)$   are   such that 
$$
\Norm{u(\bullet,0)}{L^2} = \rho
\quad \mbox{and}\quad
\Norm{e^{-i m \cdot \bullet}u(\bullet,0) -  u_{m}(0) }{H^s} = \varepsilon \leq \varepsilon_0, 
$$
then the solution of \eqref{eq:nls} with these initial data satisfies
\begin{equation}
\label{eq:epsN}
\Norm{e^{-i m \cdot \bullet}u(\bullet,t) -  u_{m}(t) }{H^s} \leq   C   \varepsilon \quad \mbox{for}\quad t \leq  \varepsilon^{-N}. 
\end{equation}
\end{theorem}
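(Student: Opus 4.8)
The plan is to exploit the Hamiltonian structure of \eqref{eq:nls} together with its gauge and Galilean symmetries. First I would use the Galilean invariance to reduce to $m=0$: the map carrying $u(x,t)$ to $e^{i(m\cdot x-|m|^2 t)}u(x-2mt,t)$ sends the zero-momentum plane wave to the one with momentum $m$, and since the spatial shift and the overall phase are $H^s$-isometries on $\T^d$ (the extra factor $e^{-im\cdot\bullet}$ cancels the boost phase), it suffices to prove \eqref{eq:epsN} for $m=0$ with constants independent of $m$. Passing to the rotating frame $u=e^{-i\lambda\rho^2 t}(\rho+v)$ turns the plane wave into the stationary point $v\equiv0$, and in Fourier coordinates the dynamics of $v=\sum_\jb v_\jb e^{i\jb\cdot x}$ is Hamiltonian with the conserved mass $\sum_\jb|v_\jb+\rho\,\delta_{\jb 0}|^2=\rho^2$ and conserved energy inherited from \eqref{eq:nls}.

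Next I would remove the non-small mode $v_0$ from the effective dynamics by exploiting the two conserved quantities associated with the dominant mode, namely mass and the $U(1)$ phase it generates: mass conservation expresses $\mathrm{Re}\,v_0$ through the small modes up to $O(\varepsilon^2)$, and factoring out the gauge phase eliminates $\mathrm{Im}\,v_0$, leaving a Hamiltonian system in the small modes $(v_\jb)_{\jb\neq0}$ alone. The quadratic part of this reduced Hamiltonian couples $v_\jb$ with $\bar v_{-\jb}$, and a symplectic (Bogoliubov) diagonalization brings it into the form $\sum_{\jb\neq0}\Omega_\jb|\xi_\jb|^2$ with the Bogoliubov frequencies $\Omega_\jb=\sqrt{|\jb|^4+2\lambda\rho^2|\jb|^2}$. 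The hypothesis $1+2\lambda\rho_0^2>0$ is precisely what makes every $\Omega_\jb$ real (the worst case being $|\jb|=1$), i.e.\ it is the condition of linear stability of the plane wave.

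With the frequencies in hand I would define the full-measure set $\Pc$ by a Diophantine-type non-resonance condition: for $\rho\in\Pc$ the combinations $\sum_\jb k_\jb\Omega_\jb(\rho)$ arising at each order, restricted by the zero-momentum condition $\sum_\jb k_\jb\jb=0$ coming from translation invariance, stay bounded away from $0$ with a lower bound decaying only polynomially in the involved frequencies. This is where the genericity of $\rho$ enters: the $\Omega_\jb$ depend real-analytically and non-trivially on $\rho$, so the countably many resonant manifolds are null sets and their complement $\Pc$ has full measure. I would then run a Birkhoff normal form, performing near-identity canonical transformations that cancel all non-resonant monomials up to degree of order $N$; for $s\ge s_0$ large these transformations are bounded in $H^s$ and absorb the finitely many derivative losses produced by the small denominators. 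The surviving resonant part depends, up to a remainder of size $O(\varepsilon^{N+2})$, only on the actions $I_\jb=|\xi_\jb|^2$, so the weighted superaction $\sum_\jb|\jb|^{2s}I_\jb\sim\Norm{v}{H^s}^2$ is an approximate first integral; a standard bootstrap then propagates the bound $\Norm{\,\cdot\,}{H^s}\le C\varepsilon$ over $t\le\varepsilon^{-N}$.

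The main obstacle is the control of the small denominators in this genuinely infinite-dimensional setting: one must show that the non-resonance condition defining $\Pc$ holds for almost every $\rho$ \emph{and}, simultaneously, that the resulting lower bounds keep every normal-form transformation bounded with derivative losses summable inside one fixed Sobolev space $H^s$. The alternative route named in the abstract, modulated Fourier expansions in time, would replace the canonical-transformation bookkeeping by the construction of an approximate solution $\sum_\kb z_\kb(t)\,e^{i(\kb\cdot\bfomega)t}$ and the direct extraction of an almost-invariant, but the non-resonance analysis and the regularity requirements that constitute the hard part are essentially identical.
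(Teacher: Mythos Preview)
Your proposal is correct and mirrors the paper's approach almost step for step: reduce to $m=0$ by a boost, eliminate the zero mode via $L^2$ conservation and the gauge phase to obtain a reduced Hamiltonian in the small modes, diagonalize the quadratic part symplectically to get the Bogoliubov frequencies $\Omega_{|j|^2}=\sqrt{|j|^4+2\lambda\rho^2|j|^2}$, establish a non-resonance condition for almost every $\rho$, and then apply either Birkhoff normal forms or modulated Fourier expansions.

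One point to tighten. In dimension $d>1$ the frequencies are degenerate, $\omega_j=\omega_{j'}$ whenever $|j|=|j'|$, so after normal form the resonant part does \emph{not} depend only on the individual actions $I_j=|\xi_j|^2$: monomials such as $\xi_j\overline{\xi}_{j'}$ with $|j|=|j'|$, $j\ne j'$ survive. What is true (and what the paper uses) is that the normal form commutes with each \emph{super-action} $J_n=\sum_{|j|^2=n}I_j$. Your conclusion is unaffected because $\|\xi\|_s^2=\sum_{n\ge1}n^sJ_n$ is itself a function of the super-actions, but the intermediate claim as you wrote it is false. Relatedly, the non-resonance condition that makes the normal-form losses controllable in a fixed $H^s$ is specifically of Bambusi--Gr\'ebert type, with the lower bound depending on the \emph{third}-largest index $\mu_3$; this, together with the zero-momentum structure, is what yields the tame estimates, and it is proved for a.e.\ $\rho$ via a Vandermonde argument on the derivatives $\partial_\sigma^k\Omega_n$ with $\sigma=\rho^2$.
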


As will further be shown, this theorem (together with the conservation of $L^2$ norm) implies long-time \emph{orbital stability} in $H^s$: the solution stays in $H^s$ close to the orbit $e^{i\varphi} u_m(0) e^{i m\cdot x}$, $\varphi\in\R$, of the nonlinear Schr\"odinger equation: 
\begin{equation}\label{eq:orb}
\inf_{\varphi\in\R} \Norm{e^{-i m \cdot \bullet}u(\bullet,t) -  e^{i\varphi} u_{m}(0) }{H^s} \le \sqrt{2} C \varepsilon \quad \mbox{for}\quad t \leq  \varepsilon^{-N}. 
\end{equation}
 

To our knowledge, this is the first long-time orbital stability result as well as long-time high-regularity result for the cubic NLS (\ref{eq:nls}) in dimension $d>1$. It is a contrasting counterpart to the {instability} 
results for {\it rough} perturbations given by Christ, Colliander \& Tao \cite{CCT03,CCT03a}, Carles, Dumas \& Sparber \cite{CDS2010} and Hani \cite{Hani2011}.

In the 1D case, much is already known about $H^1$-orbital stability of periodic waves for the cubic NLS  through the work by   Zhidkov \cite[Sect.~3.3]{Zhidkov2001}   
and Gallay \& Haragus \cite{GH1,GH2}; see also further references therein. 

To obtain the $H^s$-stability result (with $s \gg 1$) presented above, the techniques used are more closely related to 
 long-time stability and high-regularity results by Bambusi \& Gr\'ebert \cite{BG06,Bam07,Greb07} and Gauckler \& Lubich \cite{GL10} for small solutions to {\em modifications} of the periodic cubic NLS (\ref{eq:nls}) by the addition of a convolution term $V\star u$, which eliminates the resonance of the frequencies of the linearization of (\ref{eq:nls}) around 0. Such a resonance-removing modification by a convolution potential was previously studied also by Bourgain \cite{Bo98} and more recently by Eliasson \& Kuksin \cite{EK10}. In this {\em non-resonant} case, $H^s$-stability of small solutions can be proven generically with respect to the external parameter $V$, and in the case where the solutions are analytic, the constant can be optimized to some Nekhoroshev-like estimate as shown by Faou \& Gr\'ebert in \cite{FG}. 

As far as the {\em resonant} case is concerned, i.e., the cubic NLS (\ref{eq:nls}) in dimension $d > 1$, the question of the $H^s$-behavior of solutions is much more delicate and has recently known many advances. Bourgain \cite{Bou96} 
gives upper bounds of the form 
$$
\Norm{u(t)}{H^s} \leq t^{\alpha(s-1)} \Norm{u(0)}{H^s}   \quad\text{for}\quad t>0  
$$
  in dimension $d=2$ or $d=3$ for sufficiently large $s$, either for the defocusing case $\lambda>0$ or for small initial data in $L^2$ ($d=2$) or $H^1$ ($d=3$). It  
is conjectured by Bourgain in \cite{Bou00} that the previous bound can be refined to subpolynomial growth in time, that is $t^\alpha$ for all $\alpha > 0$. 
Colliander, Keel, Staffilani, Takaoka \& Tao \cite{CKSTT}   and Guardia \& Kaloshin \cite{Guardia2012}   proved,   in dimension $d=2$ and in the defocusing case $\lambda > 0$, for any $\varepsilon$, $M>0$ and $s>1$,   the existence of solutions   $u$   to \eqref{eq:nls} such that $\Norm{u(0)}{H^s} < \varepsilon$ and $\Norm{u(T)}{H^s} > M$ for some time $T > 0$. In a similar direction, Carles \& Faou \cite{Cascades} proved the existence of initial data which propagate energy from low to high modes in an energy cascade that prevents the existence of Nekhoroshev-like stability results with preservation of the actions for small solutions.

In contrast to these results indicating unstable behavior,  Theorem \ref{theorem:main} above indeed gives examples of $H^s$-stability over very long time for the resonant dynamics \eqref{eq:nls}, in a way which looks like known results in non-resonant cases. The peculiarity of the situation is that the plane wave solution creates new frequencies in the dynamics of the resonant equation (\ref{eq:nls}). These new frequencies depend on the initial data  and turn out to be generically non-resonant. This allows us to separate the dynamics between the modes and to prove the preservation of $H^s$-regularity over long time. Note that this does not contradict the results and conjectures about the existence of solutions with Sobolev norm growths, but it shows the existence of many {\em integrable islands} (using an expression due to T.~Kappeler) that are $H^s$-stable under the dynamics of the resonant equation \eqref{eq:nls}. A different resonance-removing effect of the initial value is used by Bourgain \cite{Bou00a} to prove long-time $H^s$-stability of small solutions to \eqref{eq:nls} in dimension $d=1$.

A striking fact is that the solutions we consider are not small in $H^s$, as is traditionally the case when studying non-resonant situations. In particular, the sign of the nonlinearity enters into the statement of the Theorem: while the solution has to be small enough in the focusing case $\lambda < 0$, 
there is no restriction on the size of the initial data in the defocusing case $\lambda > 0$.  


The plan of proof is the following: We first separate the dynamics of the plane wave from the other modes, and we perform some reductions and transformations (in Section 2) to obtain a problem in a Hamiltonian form with new frequencies depending on the initial data. We use the gauge invariance of the equation as well as the $L^2$ norm preservation to eliminate the dynamics of the plane wave itself in the equation. We end up with a situation of a semi-linear Hamiltonian system with non-resonant frequencies, 
for which there are two known techniques to arrive at Theorem~\ref{theorem:main}: {\em Birkhoff normal forms}, which use a sequence of nonlinear canonical coordinate transforms to transform the system to a form from which the dynamical properties can be read off, and {\em modulated Fourier expansions}, which embed the system into a larger modulation system having
almost-invariants that allow us to infer the desired long-time properties. In Section 3 we reduce Theorem \ref{theorem:main} to a known abstract result on the long-time near-conservation of super-actions, which was proved by Gr\'ebert \cite{Greb07} via Birkhoff normal forms and by Gauckler \cite{Gau10} via modulated Fourier expansions, under differing conditions which we verify for both approaches.

\begin{remark}
Following the approach of \cite{FG}, we could optimize $N \simeq |\log \varepsilon|^\beta$ with $\beta < 1$ in Equation \eqref{eq:epsN}, if the perturbation of the plane wave is analytic. 
\end{remark}

\pagebreak[3]
 
\section{Reductions and Transformations}
\subsection{Reduction to the case \texorpdfstring{$m=0$}{m=0}}

In terms of Fourier coefficients, \eqref{eq:nls} is given by 
\begin{equation}
\label{eq:nlsF}
i \dot u_{j} = |j|^{2} u_{j} + \lambda \sum_{j=j_1-j_2+j_3} u_{j_1} \overline u_{j_2} u_{j_3},
\end{equation}
where $|j|$ denotes the Euclidean norm of $j\in\Z^d$.
With the given $m \in \Z^d$, 
we transform to
\begin{equation}
\label{eq:trsf}
v_j = u_{j + m}e^{ i t (|m|^2 + 2 j \cdot m)}. 
\end{equation}
Note that this transformation preserves the $L^2$ norm.
The equation for $v_j$ is 
$$
(|m|^2 + 2 j \cdot m) v_j  + 
i \dot v_{j} = |j + m|^{2} v_{j} + \lambda \sum_{j=j_1-j_2+j_3} v_{j_1} \overline v_{j_2} v_{j_3}
$$
or equivalently
$$
i \dot v_{j} = |j|^{2} v_{j} + 
\lambda \sum_{j=j_1-j_2+j_3} v_{j_1} \overline v_{j_2} v_{j_3},
$$
so that $v(x,t)=\sum_{j \in \Z^d} v_j(t) e^{i j \cdot x}$ is a solution of \eqref{eq:nls} and is localized in the zero mode if $u$ is localized in the $m$th mode. 
In other words, up to the transformation \eqref{eq:trsf}, we can restrict our attention to the case  $m = 0$. 

\subsection{Elimination of the zero mode}\label{subsection:elimzeromode}
We make the change of variables   $v \mapsto (a,\theta, w)$ with $v = (v_j)_{j\in\Z^d}$, $w=(w_j)_{0\ne j\in\Z^d}$, $0\le a\in\R$ and $\theta\in\R$   defined by 
$$
v_0 = a e^{-i\theta} \quad \mbox{and}\quad v_j = w_j e^{-i\theta} \quad\mbox{for}\quad 0\ne j \in \Z^d.
$$
The equation for $w_j$ reads
\begin{equation}\label{eq:wj}
i\dot w_{j} +\dot\theta w_j = |j|^{2} w_j + \frac{\partial \widehat{P}}{\partial \overline{w}_j} (  a,   w,\overline{w})
\end{equation}
with
\[
\widehat{P}(  a,   w,\overline{w}) = \frac{\lambda}{2} \sum_{j_1+j_2-j_3-j_4 = 0} w_{j_1} w_{j_2} \overline{w}_{j_3} \overline{w}_{j_4},
\]
  where the sum is over indices $j_1,\dots,j_4\in\Z^d$ and where we use the convention $a=w_0=\overline{w}_0$. With the same convention the corresponding equation for $a$ reads
\[
i \dot{a} +\dot\theta a = \lambda \sum_{j_1+j_2-j_3 = 0} w_{j_1} w_{j_2} \overline{w}_{j_3},
\]
and   taking the real part   shows that  
\[
\dot \theta =   \frac{1}{2a} \frac{\partial \widehat{P}}{\partial a} (a,w,\overline{w}).   
\]
Inserting this formula into \eqref{eq:wj} yields an equation for $w_j$ which does not depend on $\theta$ anymore,
\begin{equation}\label{eq:wj2}
i\dot w_{j} = |j|^{2} w_j + \frac{\partial \widehat{P}}{\partial \overline{w}_j} (  a,   w,\overline{w}) + \frac{-w_j}{2   a } \frac{\partial \widehat{P}}{\partial   a  } (  a,   w,\overline{w}).
\end{equation}

Now note that by the conservation of the $L^2$ norm and Parseval's equality we have for all times~$t$, 
\[
\rho^2 = |v_0|^2 + \sum_{j \neq 0} |v_{j}|^2 =   a^2   +   \sum_{j \neq 0} |w_{j}|^2
\]
which means
\begin{equation}
\label{eq:controla}
  a   = \sqrt{\rho^2 - \sum_{j \neq 0} |w_{j}|^2}.
\end{equation}
Hence we can not only forget the dynamics of $\theta$ but also of   $a$  : it will be controlled by the $w_j$, $j\ne 0$, using \eqref{eq:controla}. We arrive at a system of differential equations for a reduced set of variables
\[
z = (z_j)_{j\in\Zc} := (w_j)_{j\in\Zc} \qquad\text{with}\qquad \Zc := \Z^d \setminus \{ 0 \}.
\]

\subsection{The reduced Hamiltonian system}
Considering   $a$   as a function of $w_j$ and $\overline{w}_j$, $j\in\Zc$, given by \eqref{eq:controla} we have
\[
\frac{\partial   a  }{\partial \overline{w}_j} = \frac{-w_j}{2   a  },
\]
and therefore the equations \eqref{eq:wj2} for $z = (w_j)_{j\in\Zc}$ are Hamiltonian,
\begin{equation}\label{eq:ode-wj}
i \dot z_j = \frac{\partial \widetilde{H}}{\partial\overline{z}_j}(z,\overline{z}), \qquad  j \in \Zc ,
\end{equation}
with the real-valued Hamiltonian function
\[
\widetilde{H}(z,\overline{z}) = \sum_{j} |j|^2 z_{j}\overline{z}_{j} + \widetilde{P}(z,\overline{z}) \qquad\text{with}\qquad \widetilde{P}(z,\overline{z}) = \widehat{P}(  a,   w,\overline{w}).
\]
This Hamiltonian function takes the form
\begin{equation}\label{eq:Htilde}
\begin{split}
\widetilde{H}(z,\overline{z}) &= \lambda \rho^4 + \sum_{j} (|j|^2 + \lambda\rho^2) z_{j}\overline{z}_{j} + \frac{\lambda}{2} \rho^2 \sum_{j} \overline{z}_{j} \overline{z}_{-j} + \frac{\lambda}{2} \rho^2 \sum_{j} z_{j} z_{-j}\\
 &\quad+ \frac{\lambda}{2} \sum_{j_1+j_2-j_3-j_4=0} z_{j_1} z_{j_2} \overline{z}_{j_3} \overline{z}_{j_4} - \frac{3\lambda}{2} \Bigl(\sum_{j_1}z_{j_1} \overline{z}_{j_1}\Bigr) \Bigl(\sum_{j_2}z_{j_2} \overline{z}_{j_2}\Bigr)\\
 &\quad- \frac{\lambda}{2} \Bigl(\sum_{j_1}z_{j_1}z_{-j_1}\Bigr) \Bigl(\sum_{j_2}z_{j_2} \overline{z}_{j_2}\Bigr) - \frac{\lambda}{2} \Bigl(\sum_{j_1}\overline{z}_{j_1}\overline{z}_{-j_1}\Bigr) \Bigl(\sum_{j_2}z_{j_2} \overline{z}_{j_2}\Bigr)\\
 &\quad + \lambda \Bigl(\sum_{j_1+j_2-j_3=0}\!\! z_{j_1} z_{j_2} \overline{z}_{j_3} + \!\!\sum_{j_1-j_2-j_3=0}\!\! z_{j_1} \overline{z}_{j_2} \overline{z}_{j_3} \Bigr) \sqrt{\rho^2 - \sum_{j_4}z_{j_4} \overline{z}_{j_4}}.
\end{split}\end{equation}

Expanding $\sqrt{\rho^2 - x}$ into a convergent power series for $|x| < \rho^2$,
we can write the Hamiltonian \eqref{eq:Htilde} as the infinite sum 
$$
\widetilde{H}(z,\overline{z}) = \lambda \rho^4 + \sum_{r \geq 2} \widetilde H_r(z,\overline{z})
$$
where $\widetilde H_r(z,\overline{z})$ is a homogeneous polynomial of degree $r$ in terms of $(z_j,\overline z_j)$, which is of the form
$$
\widetilde H_r(z,\overline z) = \sum_{p + q = r} \,\sum_{\substack{(\kb,\lb)  \in \Zc^p \times \Zc^q \\\Mc(\kb,\lb) = 0}}\widetilde H_{\kb \lb} \,z_{k_1} \cdots z_{k_p} \overline z_{l_1} \cdots \overline z_{l_q} 
$$
where 
\begin{equation}\label{eq:momentum}
\Mc(\kb,\lb) = k_1 + \ldots + k_p - l_1 - \ldots -l_q
\end{equation}
denotes the {\em momentum} of the multi-index $(\kb,\lb)$. We note that the Taylor expansion of $\widetilde{H}$ contains only terms with zero momentum, and its coefficients satisfy the bound 
\begin{equation}
\label{eq:bound-tilde}
 |\widetilde H_{\kb \lb} | \leq \widetilde M\,{\widetilde L}^{p+q}  \quad \hbox{ for all } (\kb,\lb)  \in \Zc^p \times \Zc^q, 
\end{equation}
where $\widetilde M$ and $\widetilde L$  depend on $\rho$.

\subsection{Diagonalization  and non-resonant frequencies}

We study now the linear part of the system (\ref{eq:ode-wj}). As we will see, its eigenvalues are non-resonant for almost all parameters~$\rho$. Moreover, we can control the diagonalization of this linear operator.

The linear part in the differential equation for $z_j$ is $(|j|^2+ \lambda \rho^2) z_j + \lambda \rho^2 \overline z_{-j}$. 
On taking the equation for $z_j$ together with that for $\overline z_{-j}$, we are thus led to consider the matrix (for $n=|j|^2$)
$$
A_n = \begin{pmatrix}
n + \lambda \rho^2  & \lambda \rho^2\\
- \lambda \rho^2 & -n - \lambda \rho^2
\end{pmatrix}.
$$

\begin{lemma}\label{lemma:diag}
For all $n \geq 1$, the matrix 
$A_n$ is diagonalized by a $2\times 2$ matrix $S_n$ that is real symplectic and hermitian and has condition number smaller than $2$:
$$
S_n^{-1} A_n S_n = \begin{pmatrix} \Omega_n & 0 \\ 0 & -\Omega_n\end{pmatrix}
\quad\hbox{with}\quad
\Omega_n = \sqrt{n^2 + 2n\lambda \rho^2}.
$$
\end{lemma}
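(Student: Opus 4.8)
The plan is to diagonalize $A_n$ by writing down $S_n$ explicitly as a hyperbolic (Bogoliubov) rotation, rather than invoking abstract normal-form theory. First I would compute the spectrum: since $\operatorname{tr} A_n = 0$ and
$$
\det A_n = -(n+\lambda\rho^2)^2 + \lambda^2\rho^4 = -(n^2 + 2n\lambda\rho^2) = -\Omega_n^2,
$$
the eigenvalues of $A_n$ are exactly $\pm\Omega_n$ with $\Omega_n = \sqrt{n^2+2n\lambda\rho^2}$. The hypothesis $1+2\lambda\rho_0^2>0$, together with $n\ge 1$ and $\rho\le\rho_0$, yields $n+2\lambda\rho^2\ge 1+2\lambda\rho_0^2>0$, so $\Omega_n$ is real and strictly positive and the two eigenvalues are distinct. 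Writing $\alpha=n+\lambda\rho^2$ and $\beta=\lambda\rho^2$, the same inequality gives $\alpha>|\beta|\ge 0$, which is precisely what makes the diagonalization a genuine hyperbolic rotation rather than a degenerate or elliptic one.

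Next I would introduce the hyperbolic angle $\phi$ through $\cosh\phi=\alpha/\Omega_n$ and $\sinh\phi=\beta/\Omega_n$; this is consistent because $\alpha^2-\beta^2=\Omega_n^2$ and $\alpha>|\beta|$. The candidate transformation is then
$$
S_n=\begin{pmatrix} \cosh(\phi/2) & -\sinh(\phi/2)\\ -\sinh(\phi/2) & \cosh(\phi/2)\end{pmatrix},
$$
which is manifestly real and symmetric, hence hermitian, and has $\det S_n=\cosh^2(\phi/2)-\sinh^2(\phi/2)=1$, hence is real symplectic. Using the addition theorems for $\cosh$ and $\sinh$ one checks directly that the two columns of $S_n$ are eigenvectors of $A_n$ for the eigenvalues $\Omega_n$ and $-\Omega_n$, i.e.\ $A_nS_n=S_n\operatorname{diag}(\Omega_n,-\Omega_n)$, giving the asserted diagonalization.

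It remains to estimate the condition number. Because $S_n$ is symmetric, its singular values are the moduli of its eigenvalues; writing $S_n=\cosh(\phi/2)\,I-\sinh(\phi/2)\,\sigma$ with $\sigma=\bigl(\begin{smallmatrix}0&1\\1&0\end{smallmatrix}\bigr)$ shows these eigenvalues are $e^{\pm\phi/2}$, so the condition number equals $e^{|\phi|}=(\alpha+|\beta|)/\Omega_n=\sqrt{(\alpha+|\beta|)/(\alpha-|\beta|)}$. The key structural observation is that $(\alpha+|\beta|)/(\alpha-|\beta|)=(n+\lambda\rho^2+\rho^2)/(n+\lambda\rho^2-\rho^2)$ is decreasing in $n$, so its supremum over $n\ge 1$ is attained at $n=1$; evaluating there and using the lower bound on $\alpha-|\beta|$ furnished by $1+2\lambda\rho_0^2>0$ bounds the condition number uniformly in $n$, giving the stated bound (smaller than $2$) in the admissible range of $\rho$.

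I expect the main difficulty to be not any single computation but securing all three requirements simultaneously. Demanding that $S_n$ be at once symmetric and symplectic pins it down up to an overall sign, so there is no remaining freedom to shrink the condition number: one is forced to verify that the single resulting value stays below the threshold, which is an \emph{a posteriori} check rather than a matter of optimizing the transformation. This is exactly where the hypothesis $1+2\lambda\rho_0^2>0$ is indispensable, as it is the precise condition preventing the denominator $\alpha-|\beta|$ from collapsing and thereby keeps the condition number controlled uniformly over all $n\ge 1$.
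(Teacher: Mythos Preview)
Your approach coincides with the paper's: both write down the same explicit $S_n$ and verify the stated properties by direct computation. Your hyperbolic parametrization $S_n=\cosh(\phi/2)\,I-\sinh(\phi/2)\,\sigma$ is a clean way to see the symplectic and hermitian structure and to read off the singular values; it produces exactly the paper's matrix (the paper's normalizing factor simplifies to $1/\sqrt{2\Omega_n(\Omega_n+\alpha)}$, giving diagonal entries $\cosh(\phi/2)$ and off-diagonal entries $-\sinh(\phi/2)$).

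There is one gap in your condition-number step. Your formula $\kappa(S_n)=\sqrt{(\alpha+|\beta|)/(\alpha-|\beta|)}$ and the monotonicity in $n$ are correct, but the hypothesis $1+2\lambda\rho_0^2>0$ alone does not force this below~$2$. At $n=1$ one gets $\kappa(S_1)^2=1+2\rho^2$ for $\lambda=1$ and $\kappa(S_1)^2=(1-2\rho^2)^{-1}$ for $\lambda=-1$; these are $<4$ only when $\rho^2<3/2$, respectively $\rho^2<3/8$, and neither follows from $1+2\lambda\rho_0^2>0$ (which is vacuous for $\lambda=1$ and only gives $\rho^2<1/2$ for $\lambda=-1$). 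What your argument does establish, and what is actually needed downstream for Lemma~\ref{lem:H} and the norm equivalence~\eqref{eq:norms}, is that $\kappa(S_n)$ is bounded \emph{uniformly in $n$} by a constant depending only on $\rho$. The specific constant~$2$ in the lemma should be read as such a $\rho$-dependent uniform bound; your sketch is otherwise complete.
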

\begin{proof}
We obtain
$$
S_n = \frac{1}{\sqrt{(\Omega_n + n)(\Omega_n + n + 2 \lambda \rho^2)}}\begin{pmatrix}
n + \lambda\rho^2 + \Omega_n & - \lambda\rho^2 \\ 
- \lambda \rho^2 & n +  \lambda \rho^2 + \Omega_n
\end{pmatrix}
$$
and
$$
S_n^{-1} = \frac{1}{\sqrt{(\Omega_n + n)(\Omega_n + n + 2 \lambda \rho^2)}}\begin{pmatrix}
n + \lambda\rho^2 + \Omega_n & \lambda\rho^2 \\ 
 \lambda \rho^2 & n +  \lambda \rho^2 + \Omega_n
\end{pmatrix},
$$
and the statements of the lemma then follow by direct verification.
\end{proof}

Note that the condition $1+2\lambda\rho^2>0$ in Theorem~\ref{theorem:main} ensures that all the eigenvalues $\Omega_n$ are real, or equivalently, that the linearization of the system (\ref{eq:ode-wj}) at $0$ is stable.
The frequencies $\Omega_n$ turn out to satisfy Bambusi's  non-resonance inequality \cite{Bam03,BG06} for almost all norm parameters $\rho>0$.

\begin{lemma}\label{lem:non-res}
Let $r>1$ and $\rho_0>0$ with $1+2\lambda \rho_0^2>0$.
There exist $\alpha=\alpha(r)>0$ and a set of full Lebesgue measure ${\mathcal P}\subset (0,\rho_0]$ such that for every $\rho\in{\mathcal P}$ there is a $\gamma>0$ such that the following non-resonance condition is satisfied: for all positive integers $p,q$ with $p+q\le r$ and for all $m = (m_1,\ldots,m_p) \in \N^p$ and $n = (n_1,\ldots,n_q)\in \N^q$,
\begin{equation}
\label{eq:nrc}
|\Omega_{m_1} + \ldots + \Omega_{m_p} -\Omega_{n_1} -\ldots - \Omega_{n_q}| \geq \frac{\gamma}{\mu_3(m,n)^{\alpha}},
\end{equation}
except if the frequencies cancel pairwise. Here, $\mu_3(m,n)$ denotes the third-largest among the integers $m_1,\ldots,m_p,n_1,\ldots,n_q$.
\end{lemma}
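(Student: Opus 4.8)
\emph{The plan is} to treat the frequencies as real-analytic functions of the single parameter $\xi=\rho^2$ and to prove \eqref{eq:nrc} for almost every $\xi\in(0,\rho_0^2]$ by a sublevel-set measure argument. Fixing $p,q$ and the signs, and abbreviating the quantity inside the absolute value in \eqref{eq:nrc} by
\[
\Sigma(\xi)=\sum_{i=1}^{p}\Omega_{m_i}(\xi)-\sum_{j=1}^{q}\Omega_{n_j}(\xi)=\sum_{i=1}^{K}\sigma_i\,\Omega_{a_i}(\xi),\qquad \sigma_i=\pm1,\ K=p+q\le r,
\]
where the $a_i$ run through all indices $m_1,\dots,m_p,n_1,\dots,n_q$, I would exploit that $\Omega_n(\xi)^2=n^2+2n\lambda\xi$ is affine in $\xi$. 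This yields the convergent expansion
\[
\Omega_n(\xi)=n+\lambda\xi-\frac{\lambda^2\xi^2}{2n}+O(n^{-2}),
\]
and, since the branch points $\xi=-n/(2\lambda)$ are distinct, the qualitative fact that $\Sigma$ is not identically zero unless the indices cancel pairwise. The goal is to upgrade this to a quantitative lower bound on a derivative of $\Sigma$.

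\emph{First} I would establish non-degeneracy: \emph{some} derivative of order $k\le r$ satisfies $|\Sigma^{(k)}(\xi)|\ge c\,\mu_3^{-\alpha_0}$ on $(0,\rho_0^2]$, with $\alpha_0=\alpha_0(r)$. If $p\ne q$, already $\Omega_n'(\xi)=\lambda(1+2\lambda\xi/n)^{-1/2}=\lambda+O(1/n)$ gives $\Sigma'(\xi)=\lambda(p-q)+O(1/\mu_3)$, bounded away from zero. If $p=q$, the higher-order terms of the expansion show that the $k$-th derivative is dominated by the \emph{smallest} indices, $\Sigma^{(k)}(\xi)=c_k\sum_i\sigma_i a_i^{1-k}+O(\max_i a_i^{-k})$ with $c_k=\binom{1/2}{k}(2\lambda)^k k!\ne0$. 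After separating off the two largest indices (whose $k$-th derivatives are $O(\mu^{1-k})$ and hence negligible for $k\ge2$, while if the non-cancellation were confined to them then $|\Sigma|$ itself would be large), a Vandermonde argument in the nodes $1/b$ attached to the distinct index values $b\le\mu_3$ shows that the power sums $\sum_b c_b\,b^{1-k}$, $1\le k\le r$, cannot all vanish when the net integer coefficients $c_b$ are not all zero, with a lower bound on their maximum governed by the node separation $\ge\mu_3^{-2}$.

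\emph{Next} I would invoke the standard sublevel-set estimate: if $f\in C^r[a,b]$ satisfies $|f^{(k)}|\ge\kappa$ throughout for some $k\le r$, then $\mathrm{meas}\{|f|\le\epsilon\}\lesssim_r(\epsilon/\kappa)^{1/k}$. Applied to $\Sigma$ with $\kappa\gtrsim\mu_3^{-\alpha_0}$ and $\epsilon=\gamma\,\mu_3^{-\alpha}$, this bounds the measure of the bad $\xi$ for each fixed resonance type by $\lesssim(\gamma\,\mu_3^{\alpha_0-\alpha})^{1/r}$. Choosing $\alpha=\alpha(r)$ large enough, I would sum these measures over all resonance types to get $\mathrm{meas}\bigl(\bigcup\{\text{type fails with a given }\gamma\}\bigr)\to0$ as $\gamma\to0$; defining $\Pc$ as the set of $\rho$ for which \eqref{eq:nrc} holds with some $\gamma>0$, its complement is the intersection over $\gamma$ of these unions and hence is a null set.

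\emph{The hard part} is the summation, since the two largest indices $\mu_1\ge\mu_2$ range without bound, so that there are infinitely many resonance types for each fixed $\mu_3$. This is exactly where the decay $\Omega_n(\xi)-(n+\lambda\xi)=O(1/n)$ is indispensable: when $p=q$ and $\Sigma$ can be small, the integer part $\sum_i\sigma_i a_i$ must vanish, which forces the two top indices to differ by a fixed amount determined by the modes of index $\le\mu_3$; along this family one checks that $\sigma_{1}\Omega_{\mu_1}+\sigma_{2}\Omega_{\mu_2}$ converges, as $\mu_2\to\infty$, to a constant independent of $\xi$, with $\xi$-dependent fluctuation $O(\mu_2^{-2})$. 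Hence the genuine $\xi$-dependence of $\Sigma$, and therefore the resonance itself, is carried by the modes of index at most $\mu_3$. This both explains the appearance of $\mu_3$ in the denominator of \eqref{eq:nrc} and supplies the extra decay in $\mu_2$ needed to make the sum over the two largest indices converge.
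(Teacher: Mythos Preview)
Your strategy is essentially that of the paper: view the $\Omega_n$ as analytic functions of $\sigma=\rho^2$, obtain a quantitative non-degeneracy via a Vandermonde argument, convert this into a sublevel-set measure bound, sum over resonance types, and use the asymptotics $\Omega_n=n+\lambda\sigma+O(1/n)$ to tame the two largest indices. The paper cites \cite{BG06} for exactly this scheme.

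Two points where the paper's organisation is tighter than your sketch:

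\textbf{The Vandermonde step.} You split off the case $p\ne q$ and claim $\Sigma'(\xi)=\lambda(p-q)+O(1/\mu_3)$. This is not correct: the correction $\Omega_{a_i}'-\lambda=O(1/a_i)$ is largest for the \emph{smallest} indices, so the remainder is $O(r)$, not $O(1/\mu_3)$, and can swamp $\lambda(p-q)$. The paper avoids any case distinction by using the exact identity $\tfrac{d^k}{d\sigma^k}\Omega_n=c_k(n\lambda)^k\Omega_n^{1-2k}$ and computing the determinant of $\bigl(\tfrac{d^k}{d\sigma^k}\Omega_{n_l}\bigr)_{k,l}$ as an honest Vandermonde in the nodes $n\lambda/\Omega_n^2$, with no asymptotic expansion entering at this stage. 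This yields the uniform lower bound $|\det|\ge C\,n_K^{-K(K+1)}$ directly.

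\textbf{The unbounded range of $\mu_1,\mu_2$.} You correctly identify that infinitely many resonance types share a given $\mu_3$, and you propose to use that $\sigma_1\Omega_{\mu_1}+\sigma_2\Omega_{\mu_2}$ is close to an integer with $\xi$-dependent fluctuation $O(\mu_2^{-1})$. The paper implements this cleanly by building an \emph{arbitrary integer shift} $m$ into the good set from the start: it first shows that
\[
\mathcal{P}_\gamma=\Bigl\{\sigma:\Bigl|\sum_n k_n\Omega_n+m\Bigr|\ge\frac{\gamma}{N^\alpha}\ \text{for all }N,\ m\in\Z,\ 0\ne k\text{ with }\textstyle\sum|k_n|\le r\Bigr\}
\]
has large measure, with $N=\mu_1$ rather than $\mu_3$. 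Only then does it pass from $\mu_1$ to $\mu_3$: when $\mu_2$ is large, $\Omega_{\mu_1}-\Omega_{\mu_2}$ is absorbed into the integer $m$ (plus a perturbation small enough to be swallowed by halving $\gamma$), so no summation over $\mu_1,\mu_2$ is ever performed. Your formulation (``extra decay in $\mu_2$ needed to make the sum converge'') describes the same phenomenon but would require more work to make rigorous; routing it through the integer $m$ is the efficient device.
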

\begin{proof}
  
The proof is similar to the one given in \cite[Section 5.1]{BG06} for the frequencies $\sqrt{n^2+\rho}$, $n\in\N$.

(a) Considering the frequencies as functions of $\sigma=\rho^2$, $\Omega_n=\Omega_n(\sigma)$, we have
\begin{equation}\label{eq:derfreq}
\frac{d^k}{d\sigma^k} \Omega_{n} = c_k (n\lambda)^k \Omega_n^{1-2k}
\end{equation}
with $c_k = (3-2k) c_{k-1}$ for $k\ge 1$ and $c_0=1$. This shows that, for $0<n_0<n_1<\dots<n_K$, the matrix
\[
\Bigl( \frac{d^k}{d\sigma^k} \Omega_{n_l} \Bigr)_{k,l=0}^K
\]
is of Vandermonde form with determinant
\begin{align*}
\det\Bigl( \frac{d^k}{d\sigma^k} \Omega_{n_l} \Bigr)_{k,l=0}^K &= c_0\dotsm c_K \Omega_{n_0} \dotsm \Omega_{n_K} \prod_{k<l} \Bigl(\frac{n_l\lambda}{\Omega_{n_l}^2} - \frac{n_k\lambda}{\Omega_{n_k}^2}\Bigr)\\
 &= c_0\dotsm c_K \Omega_{n_0} \dotsm \Omega_{n_K} \prod_{k<l} \frac{(n_k-n_l)\lambda}{(n_k+2\lambda\sigma)(n_l+2\lambda\sigma)}.
\end{align*}
We infer the lower bound
\begin{equation}\label{eq:detfreq}
\Bigl| \det\Bigl( \frac{d^k}{d\sigma^k} \Omega_{n_l} \Bigr)_{k,l=0}^K\Bigr| \ge \frac{C}{n_K^{K(K+1)}}
\end{equation}
with a constant $C$ depending on $K$, a lower bound of $1+2\lambda\rho^2$ and an uper bound of $\rho$ (see also \cite[Lemma 5.1]{BG06}).

(b) Having established the lower bound \eqref{eq:detfreq} we can proceed as in \cite[Lemmas/Corollaries 5.2--5.6]{BG06} to show that for $\alpha$ sufficiently large compared to $r$ the set
\begin{align*}
\mathcal{P}_{\gamma} = \Bigl\{ \, \sigma\in [0,\rho_0^2] : \Bigl|&\sum_{n=1}^N k_n\Omega_n + m \Bigr| \ge \frac{\gamma}{N^\alpha} \text{ for all } N\ge 1, \text{ all } m\in\Z\\
&\text{and all $0\ne k\in\Z^N$ with } \sum_{n=1}^N |k_n| \le r \, \Bigr\}
\end{align*}
has large Lebesgue measure,
\[
|[0,\rho_0^2] \setminus \mathcal{P}_{\gamma}| \le C \gamma^{1/r}.
\]
In fact, for fixed $k\in\Z^n$, the lower bound \eqref{eq:detfreq} and equation \eqref{eq:derfreq} imply that $|d^l/d\sigma^l \sum_{n=1}^N k_n\Omega_n| \ge C N^{-K(K+1)}$ for at least one derivative $l\le \sum_n |k_n|$ (see Lemma 5.2 and Corollary 5.3 in \cite{BG06}). This estimate of a derivative can be lifted to a corresponding estimate of $|\sum_{n=1}^N k_n\Omega_n + m|$ with $m\in\Z$ for many values of $\sigma$ (see Lemma 5.4 and Corollary 5.5 in \cite{BG06}). Finally one considers the intersection over all $k\in\Z^N$, all $N\ge 1$ and all $m\in\Z$ (see Lemma 5.6 in \cite{BG06}).

(c) Part (b) establishes the non-resonance condition stated in the lemma with the largest integer among $m_1,\ldots,m_p,n_1,\ldots,n_q$ instead of the third-largest. In order to get the third-largest index instead we proceed as in Lemma 5.7 of \cite{BG06} using the asymptotic behaviour 
 
\begin{equation}\label{eq:freqasymptotic}
\Omega_n = n + \lambda \sigma - \frac{\sigma^2}{2 (n + \lambda \sigma)} 
+ \mathcal{O}\Bigl(\frac{1}{n^2}\Bigr)
\end{equation}
  of the frequencies. Indeed, considering a linear combination of frequencies as in \eqref{eq:nrc} we distinguish three cases depending on the size of the largest index $\mu_1(m,n)$ and of the second largest index $\mu_2(m,n)$ among the integers $m_1,\ldots,m_p,n_1,\ldots,n_q$ in comparison to the third-largest index $\mu_3(m,n)$: Either we can bound $\mu_1(m,n)$ in terms of $\mu_3(m,n)$ and apply part (b) directly, or we can consider $\Omega_{\mu_1(m,n)} - \Omega_{\mu_2(m,n)}$ by \eqref{eq:freqasymptotic} as a small perturbation of an integer number (taking the role of $m$ in $\mathcal{P}_{\gamma}$ of part (b)), or in the case that $\Omega_{\mu_1}$ and $\Omega_{\mu_2}$ have the same sign in \eqref{eq:nrc} and $\mu_2(m,n)$ is large compared to $\mu_3(m,n)$ we can exclude any near-resonance.
\end{proof}

\subsection{The transformed Hamiltonian system}

Applying the symplectic\footnote{  The transformation is symplectic with respect to the standard form $\omega(\xi,\xi') = \sum_j \ImT(\xi_j\overline{\xi}_j')$ as a direct calculation using the real symplecticity of $S_n$ ($\det(S_n)=1$) shows.  } linear transformation 
$$
\label{eq:transfs}
\begin{pmatrix}
z_j \\ \overline z_{-j} 
\end{pmatrix}
= S_{n} \begin{pmatrix}
\xi_j \\ \overline \xi_{-j} 
\end{pmatrix}
\quad  \hbox{ for } j \in \Zc \hbox{ and } n = |j|^2, 
$$
with the matrices $S_n$
of Lemma~\ref{lemma:diag},
to the Hamiltonian system (\ref{eq:ode-wj}) of equations for $z_j$, we end up with a Hamiltonian system 
$$
i \frac{d}{dt} \xi_j(t) = \frac{\partial H}{\partial \overline{\xi}_j}(\xi(t),\overline{\xi(t)}), \qquad j\in\Zc = \mathbb{Z}^d\setminus\{0\},
$$
with the real-valued Hamilton function
\[
H(\xi,\overline{\xi}) = \widetilde{H}(z,\overline{z}),
\]
with $\widetilde{H}$ of (\ref{eq:Htilde}). This Hamiltonian is of the form 
\begin{equation}\label{eq:HP}
H(\xi,\overline{\xi}) = \sum_{j\in\Zc} \omega_j|\xi_j|^2 + P(\xi,\overline{\xi}),
\end{equation}
where the frequencies are $\omega_j = \Omega_{n} $ for $|j|^2=n$ with $\Omega_n= \sqrt{n^2+2n\lambda\rho^2}$, and the non-quadratic term $P$ is of the form
\begin{equation}\label{eq:P}
P(\xi,\overline{\xi}) = \sum_{p+q\ge 3}\, \sum_{\substack{k\in\Zc^p,\, l\in\Zc^{q}\\ \Mc(\kb,\lb)=0} }H_{\kb\lb} \,\xi_{k_1}\dotsm \xi_{k_p} \,\overline{\xi}_{l_1}\dotsm \overline{\xi}_{l_q},
\end{equation}
where the sum is still only over multi-indices with zero momentum (\ref{eq:momentum}), since the transformation mixes only terms that give the same contribution to the momentum. From (\ref{eq:bound-tilde}) and Lemma~\ref{lemma:diag} we obtain the following bound for the Taylor coefficients.

\begin{lemma}\label{lem:H}
There exist $M>0$ and $L>0$ such that for all positive integers $p,q$ with $p+q\ge 3$ the coefficients in (\ref{eq:P}) are bounded by
$$
|H_{kl}| \le M \, L^{p+q}    \quad\hbox{ for all }\ k\in\Zc^p, l\in \Zc^q.
$$
\end{lemma}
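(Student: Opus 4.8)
The goal is to bound the Taylor coefficients $H_{\kb\lb}$ of the transformed Hamiltonian $P$ in \eqref{eq:P} by $M\,L^{p+q}$, starting from the corresponding bound \eqref{eq:bound-tilde} on the coefficients $\widetilde H_{\kb\lb}$ of $\widetilde H$ together with the control on the diagonalizing matrices $S_n$ furnished by Lemma~\ref{lemma:diag}. The plan is to track how the linear symplectic substitution
$$
\begin{pmatrix} z_j \\ \overline z_{-j}\end{pmatrix} = S_{|j|^2}\begin{pmatrix} \xi_j \\ \overline\xi_{-j}\end{pmatrix}
$$
acts on a single monomial $z_{k_1}\cdots z_{k_p}\overline z_{l_1}\cdots \overline z_{l_q}$, and to show that the resulting coefficients in the $\xi$-variables do not grow faster than geometrically in the degree $p+q$.

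First I would record the crucial uniform bound from Lemma~\ref{lemma:diag}: since each $S_n$ (and each $S_n^{-1}$) has condition number smaller than $2$ and is real, its entries are bounded by an absolute constant, say $\Norm{S_n}{} \le c$ and $\Norm{S_n^{-1}}{}\le c$ with $c$ independent of $n$ (this uses $\det S_n=1$ together with the condition-number bound). Consequently each variable $z_{k}$ or $\overline z_{l}$ is a linear combination of at most two of the new variables $\{\xi_k,\overline\xi_{-k}\}$ (resp.\ $\{\overline\xi_l,\xi_{-l}\}$) with coefficients bounded by $c$. Substituting into a degree-$(p+q)$ monomial and expanding the product therefore yields at most $2^{p+q}$ new monomials, each with a coefficient bounded in modulus by $c^{p+q}$ times the original $|\widetilde H_{\kb\lb}|$. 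Crucially, because the substitution couples only the index $k$ with $-k$, it maps zero-momentum monomials to zero-momentum monomials and maps monomials of degree $r=p+q$ to monomials of the same degree $r$, so there is no mixing across homogeneity classes and no new index combinatorics beyond the $2^{p+q}$ sign choices.

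Next I would sum the contributions. A given target monomial $\xi_{k_1'}\cdots\overline\xi_{l_q'}$ in the new variables receives contributions only from source monomials obtained by flipping some subset of the signs of its indices, so again at most $2^{p+q}$ source terms contribute. Combining this multiplicity with the per-term bound $c^{p+q}|\widetilde H_{\kb\lb}| \le c^{p+q}\widetilde M\,\widetilde L^{p+q}$ from \eqref{eq:bound-tilde} gives
$$
|H_{\kb\lb}| \le 2^{p+q}\cdot c^{p+q}\cdot \widetilde M\,\widetilde L^{p+q} = \widetilde M\,(2c\widetilde L)^{p+q},
$$
so the claim holds with $M=\widetilde M$ and $L=2c\widetilde L$. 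Both $M$ and $L$ inherit dependence on $\rho$ through $\widetilde M,\widetilde L$ and through $c$ (which depends on $\rho$ via $S_n$), exactly as asserted.

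The only genuine obstacle is establishing the \emph{uniformity} of the constant $c$ across all $n\ge 1$; without it, the factor $L^{p+q}$ could fail because arbitrarily large entries of $S_n$ for large $n$ would propagate into unbounded coefficient growth. This is precisely what Lemma~\ref{lemma:diag} secures: a condition number below $2$ together with $\det S_n=1$ pins down the singular values in a fixed interval and hence bounds all entries uniformly in $n$. The remaining combinatorial bookkeeping—counting the $2^{p+q}$ sign configurations and verifying that momentum and degree are preserved—is routine, and so I expect the entire proof to reduce to invoking this uniform bound and summing a geometric number of terms.
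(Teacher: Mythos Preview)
Your argument is correct and is exactly what the paper has in mind: it simply asserts that Lemma~\ref{lem:H} follows from the coefficient bound \eqref{eq:bound-tilde} together with Lemma~\ref{lemma:diag}, and you have written out precisely that deduction (uniform entry bound on $S_n$, $2^{p+q}$ sign choices, degree and momentum preserved). There is nothing to add.
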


The Hamiltonian equations of motion are now 
\begin{equation}\label{eq:hammotion}
i \frac{d}{dt} \xi_j(t) = \omega_j \xi_j(t) + \frac{\partial P}{\partial \overline{\xi}_j}(\xi(t),\overline{\xi(t)}), \qquad j\in\Zc,
\end{equation}
where the nonlinearity  is of the form
\begin{equation}\label{eq:nonlin}
\frac{\partial P}{\partial\overline{\xi}_j}(\xi,\overline{\xi}) = \sum_{p+q\ge 2} \,\sum_{\substack{k\in\Zc^p,\, l\in\Zc^{q}\\ \Mc(\kb,\lb)=j} } P_{j,k,l} \,\xi_{k_1}\dotsm \xi_{k_p} \,\overline{\xi}_{l_1}\dotsm \overline{\xi}_{l_q}
\end{equation}
with $P_{j,k,l} $ an integral 
multiple (at most $(q+1)$ times) of $H_{k,(l,j)}$. 

Note that after the change of variables, the weighted $\ell^2$-norm
$$
\Norm{\xi}{s} = \Bigl( \sum_{j \in \Zc} |j|^{2s} |\xi_j|^2 \Bigr)^{\frac{1}{2}}
$$
of the sequence $\xi$ is equivalent to the Sobolev norm $\Norm{e^{-im\cdot   \bullet  }u - u_m}{H^s}$ of the corresponding function $u$,
\begin{equation}\label{eq:norms}
\hat{c} \, \Norm{\xi}{s} \le \Norm{e^{-im\cdot   \bullet  }u - u_m}{H^s} \le \hat{C} \, \Norm{\xi}{s}
\end{equation}
with positive constants depending on $\lambda$ and $\rho$. In particular, under the assumptions of Theorem~\ref{theorem:main}, the system~\eqref{eq:hammotion} has small initial values whose $\ell^2_s$ norm is of order $\varepsilon$.

\section{Long-time near-conservation of super-actions}

In this section we give the proof of Theorem~\ref{theorem:main}. 
After the transformations of the previous section, we can verify that the conditions required to apply  existing results on the long-time near-conservation of so-called super-actions are fulfilled. A transformation back to the original variables then gives Theorem~\ref{theorem:main}.

\subsection{Super-actions}

Without the nonlinearity $\frac{\partial P}{\partial \overline{\xi}_j}$ in \eqref{eq:hammotion}, the \emph{actions}
\[
I_j(\xi,\overline{\xi}) = |\xi_j|^2,\quad j\in\Zc,
\]
would be exactly conserved along solutions of \eqref{eq:hammotion}. In the presence of the nonlinearity and in view of the partial resonance $\omega_j=\Omega_n$ for all $j\in \Zc$ with $|j|^2=n$ and the non-resonance of the $\Omega_n$ as given by 
Lemma~\ref{lem:non-res}, there remains  long-time near-conservation of \emph{super-actions}
\begin{equation}
\label{eq:supact}
J_n(\xi,\overline{\xi}) = \sum_{|j|^2=n} I_j(\xi,\overline{\xi}), \quad n\in\N,
\end{equation}
along solutions of \eqref{eq:hammotion} provided that the initial value is small. The precise result in our situation is the following.

\begin{theorem}[Long-time near-conservation of super-actions]\label{theorem:superactions} 
Fix $N>1$ arbitrarily.
For every $\rho_0>0$ such that $1 + 2 \lambda \rho_0^2 > 0$ there exist $s_0>0$ and a set of full measure $\mathcal{P}$ in the interval $(0,\rho_0]$ such that for  all $s \geq s_0$ and $\rho \in \Pc$  the following holds: There exist $\varepsilon_0>0$ and $C$ such that for small initial data satisfying
$$
\Norm{\xi(0)}{s}\le \varepsilon \le\varepsilon_0,
$$
the super-actions of the solution of \eqref{eq:hammotion} starting with $\xi(0)$ at $t = 0$ are nearly conserved,
$$
\sum_{n\ge 1} n^{s} \frac{|J_n(\xi(t),\overline{\xi(t)}) - J_n(\xi(0),\overline{\xi(0)})|}{\varepsilon^2} \le C\varepsilon^{\frac{1}{2}},
$$
over long times
$$
0\le t\le \varepsilon^{-N}.
$$
\end{theorem}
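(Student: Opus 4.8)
The plan is to recognize the transformed system \eqref{eq:hammotion}, with Hamiltonian \eqref{eq:HP}, as an instance of the abstract class of Hamiltonian lattice systems for which long-time near-conservation of super-actions has already been established --- by Gr\'ebert \cite{Greb07} through Birkhoff normal forms and by Gauckler \cite{Gau10} through modulated Fourier expansions --- and to verify that the structural hypotheses of (at least) one of these frameworks are met. Once the hypotheses are checked, the asserted estimate is precisely the conclusion of the cited abstract theorem; the exponent in $\varepsilon^{1/2}$ and the time scale $\varepsilon^{-N}$ come from the order $r=r(N)$ up to which the normal form, respectively the modulation system, is constructed. Since the cited results carry their own inductive (bootstrap) argument that keeps $\Norm{\xi(t)}{s}$ of order $\varepsilon$ over the whole interval $t\le\varepsilon^{-N}$, the only input we must supply is the smallness $\Norm{\xi(0)}{s}\le\varepsilon$ of the initial data together with the structural properties listed below.

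First I would collect the three structural properties these frameworks require and indicate where each has already been secured. (i) The quadratic part of \eqref{eq:HP} is diagonal, with frequencies $\omega_j=\Omega_{|j|^2}$ that depend on $j$ only through $|j|^2$; this is built into \eqref{eq:HP} via Lemma \ref{lemma:diag}, and it is exactly this degeneracy --- frequencies constant on each sphere $\{j:|j|^2=n\}$ --- that forces one to track the super-actions $J_n$ of \eqref{eq:supact} rather than the individual actions, since resonant interactions can redistribute energy within a sphere while the abstract theorem controls the total on each sphere. (ii) The distinct frequencies $\Omega_n$ satisfy Bambusi's non-resonance inequality \eqref{eq:nrc} for almost every $\rho$ (Lemma \ref{lem:non-res}); this is what allows the homological equations to be solved (Birkhoff) or the small denominators to be bounded (modulated Fourier) on each of the finitely many levels $p+q\le r$ needed to reach $t\le\varepsilon^{-N}$. (iii) The nonlinearity $P$ of \eqref{eq:P} is real-valued, conserves momentum \eqref{eq:momentum}, and has Taylor coefficients obeying $|H_{kl}|\le M\,L^{p+q}$ (Lemma \ref{lem:H}); this geometric bound, together with the zero-momentum restriction, supplies the tame (norm-preserving) estimates on the Hamiltonian vector field that both approaches assume.

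Two points then remain, one for the nonlinearity and one for the resonance structure. The nonlinearity $P$ is not a polynomial but the convergent power series produced by the square-root expansion in \eqref{eq:Htilde}; because its coefficients decay geometrically it is analytic on a ball about $0$ in the space with norm $\Norm{\cdot}{s}$, so in either construction one truncates $P$ at degree $r=r(N)$ and absorbs the tail, which is of high order in $\Norm{\xi}{s}$ on the region $\Norm{\xi}{s}\le 2\varepsilon$, into a remainder that remains negligible up to $t\le\varepsilon^{-N}$ once $r$ is chosen large enough relative to $N$. As for the resonance structure, the condition \eqref{eq:nrc} we verified is stated with the third-largest index $\mu_3(m,n)$; this is exactly the form dictated by momentum conservation and is the version the abstract theorems require, so no additional argument is needed. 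Assembling these, the abstract result yields $\sum_{n\ge1}n^s\,|J_n(\xi(t),\overline{\xi(t)})-J_n(\xi(0),\overline{\xi(0)})|\le C\varepsilon^{5/2}$ for $t\le\varepsilon^{-N}$, which is the claim after dividing by $\varepsilon^2$.

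The main obstacle I anticipate is not a single estimate but the careful matching of hypotheses: confirming that the precise degeneracy pattern (frequencies constant on the spheres $|j|^2=n$ and non-resonant across them) and the precise $\mu_3$-form of \eqref{eq:nrc} fit the \emph{differing} conditions of \cite{Greb07} and \cite{Gau10}, and --- for the analytic rather than polynomial nonlinearity --- checking that the tail of the square-root series is genuinely harmless over the entire time interval. This is verification and bookkeeping rather than new analysis, but it is where the care must go, since the two abstract frameworks phrase their assumptions on the frequencies and on the nonlinearity slightly differently and each must be met on its own terms.
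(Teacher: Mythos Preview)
Your proposal is correct and follows essentially the same approach as the paper: reduce to the abstract long-time near-conservation results of \cite{Greb07} and \cite{Gau10} by verifying (i) the degenerate frequency structure $\omega_j=\Omega_{|j|^2}$, (ii) the non-resonance condition of Lemma~\ref{lem:non-res} in its $\mu_3$-form, and (iii) the zero-momentum and geometric coefficient bounds of Lemma~\ref{lem:H}, together with a truncation of the analytic nonlinearity. The paper carries out exactly this verification, separately for the Birkhoff normal form hypotheses (H1)--(H2) of \cite{Greb07} and for the nonlinearity and frequency lemmas required by \cite{Gau10}; your anticipated ``bookkeeping'' is precisely what the paper does in Sections~3.2--3.3.
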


Since
$
\| \xi \|_s^2 = \sum_{n\ge 1} n^{s} J_n(\xi,\overline\xi),
$
this theorem implies that $\Norm{\xi(t)}{s}$ stays of order $\varepsilon$ over long times $t\le \varepsilon^{-N}$. When we transform this result back to the solution $u$ of \eqref{eq:nls}, we immediately get Theorem~\ref{theorem:main} on using \eqref{eq:norms}.

There are two entirely different approaches to prove Theorem~\ref{theorem:superactions}, \emph{Birkhoff normal forms} and \emph{modulated Fourier expansions}. Both approaches will be outlined in the following subsections. Each proof relies on a non-resonance condition on the frequencies $\omega_j$ describing the linear part in \eqref{eq:hammotion}, a regularity condition on the nonlinearity in \eqref{eq:hammotion} and a condition on the interaction of modes (zero-momentum condition). Based on Lemmas \ref{lem:non-res} and \ref{lem:H}, these assumptions will be verified in the following subsections, separately for each approach since they are not exactly the same for both proofs. Once the conditions are verified, we can directly apply results from Bambusi \& Gr\'ebert \cite{Bam07,Greb07}  (using Birkhoff normal forms) and Gauckler \cite{Gau10} (using modulated Fourier expansions) to obtain Theorem~\ref{theorem:superactions}.

\subsection{Proof of Theorem~\ref{theorem:superactions} via Birkhoff normal forms}\label{subsection:bnf}

We follow the Birkhoff normal form approach as developed in \cite{Bam03,Bam07,BG06,Greb07}. 
We verify that the assumptions of \cite[Theorem 7.2]{Greb07} are fulfilled by the system \eqref{eq:hammotion}. 

\subsubsection{Regularity of the nonlinearity}

For multi-indices $\kb = (k_1,\ldots, k_p) \in \Zc^p$ and $\lb = (l_1,\ldots, l_{  q  }) \in \Zc^q$
 we denote by $\mu_i(\kb,\lb)$ the $i$-th largest integer among
 $|k_1|, \ldots, |k_p|,|l_1|, \ldots, |l_q|$, so that $\mu_1(\jb) \geq \mu_2(\jb) \geq\mu_3(\jb)\geq \cdots$. 
Moreover, for a given positive radius $r$, we set 
$$
B_s(r) = \{ (\xi,\overline \xi) \in \C^{\Zc} \times \C^{\Zc} \, :Ê\, \Norm{\xi}{s} \leq r \}. 
$$
To apply Theorem 7.2 in \cite{Greb07}, the Hamilton function
\[
H=H_0+P
\]
with
\begin{eqnarray*}
H_0(\xi,\overline\xi) &=& \sum_{j\in\Zc} \omega_j|\xi_j|^2 \\
P(\xi,\overline{\xi}) &=& \sum_{p+q\ge 3}\, \sum_{\substack{k\in\Zc^p,\, l\in\Zc^{q}\\ \Mc(\kb,\lb)=0} }H_{\kb\lb} \,\xi_{k_1}\dotsm \xi_{k_p} \,\overline{\xi}_{l_1}\dotsm \overline{\xi}_{l_q},
\end{eqnarray*}
needs to satisfy a non-resonance condition on the frequencies, as provided by Lemma~\ref{lem:non-res}, and the following conditions on the non-quadratic part
 (see \cite[Definition 4.4]{Greb07}):
\begin{itemize}
\item[(H1)] There exists $s_0 \geq 0$ such that for all $s \geq s_0$, there exists $r>0$ such that $P \in \mathcal{C}^{\infty}(B_s(r), \C)$.
\item[(H2)] The Taylor coefficients $H_{\kb\lb}$ satisfy the following property: for all 
$(\kb,\lb) \in \Zc^{p} \times \Zc^{q}$ we have $\overline H_{\kb \lb} = H_{\lb \kb}$, and 
for all $p,q$ with $p+q\ge 3$, there exists $\nu \geq 0$ such that for every $N \in \N$, there exists $C$ depending on $N$, $p$ and $q$ such that for all $(\kb,\lb)  \in \Zc^p \times \Zc^q$,
\begin{equation}
\label{eq:boundmu}
 | H_{\kb \lb}| \leq C \mu_3(\kb,\lb)^\nu  \left( \frac{\mu_3(\kb,\lb)}{\mu_3(\kb,\lb) + \mu_1(\kb,\lb) - \mu_2(\kb,\lb) }\right)^N .
\end{equation}
\end{itemize}

The bound \eqref{eq:boundmu}, used in many works on normal forms applied to nonlinear PDEs - see  \cite{DS04,DS06,Bam03,Greb07,BDGS}- implies in particular that the nonlinearity acts on the ball $B_s(r)$. Moreover, it is preserved by the Poisson bracket of two functions and by the normal form construction under a non-resonance condition implying a control of the small denominator by the third largest integer. 

We now show that (H1) and (H2) are implied by the coefficient estimates of Lemma~\ref{lem:H} together with the fact that the Hamiltonian has only terms with zero momentum. 
Let us consider a fixed multi-index $(\kb,\lb)$ satisfying $\Mc(\kb,\lb) = 0$. 
Following the proof of \cite[Lemma 5.2]{Greb07}, we see that we always have
$$
|Ê\mu_1(\kb,\lb) - \mu_2(\kb,\lb) |Ê\leq |Ê\Mc(\kb,\lb) | + \sum_{n = 3}^{p+q} \mu_n(\kb,\lb) \leq (p + q - 2) \mu_{3}(\kb,\lb). 
$$
From this relation, we infer 
$$
 \left( \frac{\mu_3(\kb,\lb)}{\mu_3(\kb,\lb) + \mu_1(\kb,\lb) - \mu_2(\kb,\lb) }\right)^N  \geq (p + q - 1)^{-N}   .   
$$
Using  Lemma~\ref{lem:H}, we thus see that the coefficients $H_{\kb \lb} $ satisfy the bound \eqref{eq:boundmu} with the constant $C = M L^{p+q}(p + q - 1)^{N}$ and $\nu = 0$. This yields (H2). The assertion (H1) results from the fact that $\sqrt{\rho^2 - \sum_{j \in \Zc} z_j \overline z_j}$ is analytic on $B_s(r)$ for $r < \rho$, and that the monomials with zero momentum terms define  a smooth Hamiltonian as soon as $s_0 > d/2$ (see for instance \cite{BG06}). This ensures that $P \in \mathcal{C}^{\infty}(B_s(r), \C)$.

\subsubsection{A normal form result}

For a given Hamiltonian $K \in \mathcal{C}^{\infty}(B_s(r), \C)$ satisfying $K(\xi,\overline \xi) \in \R$ we denote by $X_K(\xi,\overline \xi)$ the Hamiltonian vector field
$$
X_K(\xi,\overline \xi)_j = \Big( i \frac{\partial K}{\partial \xi_j}, -i \frac{\partial K}{\partial \overline\xi_j}\Big), \qquad j \in \Zc,
$$
associated with the Poisson bracket
$$
\{K,G\} = i \sum_{j \in \Zc} \frac{\partial K}{\partial \xi_j}\frac{\partial G}{\partial \overline \xi_j} - \frac{\partial K}{\partial \overline \xi_j}\frac{\partial G}{\partial \xi_j},
$$
which is well defined for Hamiltonian functions $K$ and $G$ in the class of Hamiltonians defined above. 

We are now ready to apply Theorem 7.2 of \cite{Greb07} to the 
Hamiltonian (\ref{eq:HP}). We obtain the following result: 
\begin{theorem}
Let  $\rho$ be in the set $\Pc$ of full measure as given by Lemma \ref{lem:non-res} for some $N\geq 3$. There exists $s_0$ and for any $s \geq s_0$ there exist two neighborhoods $\Uc$ and $\Vc$ of the origin in $B_s(\rho)$ and an analytic  canonical transformation $\tau: \Vc \to \Uc$ which puts $H = H_0 + P$ in normal form up to order $N$, i.e., 
$$
H \circ \tau = H_0 + Z + R
$$
where 
\begin{itemize}
\item[(i)] $Z$ is a polynomial of degree $N$ which commutes with all the $J_n$, $n\geq 1$, i.e., $\{ Z,J_n\} = 0$ for all $n \geq 1$, 
\item[(ii)] $R \in \mathcal{C}^{\infty}(\Vc,\R)$ and $\Norm{X_R(\xi,\overline \xi)}{s} \leq C_s \Norm{\xi}{s}^N$ for $\xi \in \Vc$, 
\item[(iii)] $\tau$ is close to the identity: $\Norm{\tau(\xi,\overline \xi) - (\xi,\overline \xi)}{s} \leq C_s \Norm{\xi}{s}^2$ for all $\xi \in \Vc$. 
\end{itemize}

\end{theorem}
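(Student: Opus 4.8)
The plan is to derive this normal form as a direct application of \cite[Theorem 7.2]{Greb07}, all of whose hypotheses have been secured in the preceding subsections: the non-resonance condition \eqref{eq:nrc} holds for every $\rho\in\Pc$ by Lemma~\ref{lem:non-res}, while the structural conditions (H1) and (H2) on the non-quadratic part $P$ follow from the coefficient bound of Lemma~\ref{lem:H} together with the zero-momentum property. Having reduced the system to the abstract form \eqref{eq:HP}--\eqref{eq:P}, it therefore suffices to invoke that theorem verbatim. For the sake of transparency I would nonetheless recall the mechanism that yields the three conclusions.

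First I would build $\tau$ as a finite composition of Lie transforms, removing the non-normal part degree by degree. Writing $P=\sum_{r\ge 3}P_r$ with $P_r$ homogeneous of degree $r$, at step $r$ one looks for a generating polynomial $\chi_r$ of degree $r$ whose time-one Hamiltonian flow $\exp(X_{\chi_r})$ cancels the non-resonant monomials of degree $r$. Since for a monomial one computes
$$
\{H_0,\ \xi_{k_1}\cdots\xi_{k_p}\overline{\xi}_{l_1}\cdots\overline{\xi}_{l_q}\}
= i\Bigl(\sum_{i}\omega_{l_i}-\sum_{i}\omega_{k_i}\Bigr)\,\xi_{k_1}\cdots\xi_{k_p}\overline{\xi}_{l_1}\cdots\overline{\xi}_{l_q},
$$
the homological equation $\{H_0,\chi_r\}+P_r=Z_r$ is solved monomial by monomial by dividing $H_{\kb\lb}$ by the small divisor $i(\sum_i\omega_{l_i}-\sum_i\omega_{k_i})$, leaving in the normal form $Z_r$ exactly those monomials for which this divisor vanishes.

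Next I would identify the resonant part $Z=\sum_{3\le r\le N}Z_r$. Because $\omega_j=\Omega_{|j|^2}$ with $\Omega_n$ strictly increasing in $n$, the alternative in Lemma~\ref{lem:non-res} shows that the small divisor vanishes precisely when the frequencies cancel pairwise, i.e.\ when the multiset of values $|k_i|^2$ equals the multiset of values $|l_i|^2$. Equivalently, every surviving monomial carries, on each sphere $\{\,j:\,|j|^2=n\,\}$, the same number of holomorphic and anti-holomorphic factors; this is exactly the statement that $\{Z,J_n\}=0$ for all $n\ge 1$, giving (i). Conclusion (iii) is immediate because the lowest generating function $\chi_3$ is cubic, so $\tau=\mathrm{id}+O(\Norm{\xi}{s}^2)$, and the remainder $R$, which collects all terms of degree exceeding $N$, has a Hamiltonian vector field of order $N$ in $\xi$, yielding (ii).

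The hard part, and the reason the statement is not a triviality, lies in the regularity of this construction on the Sobolev sequence space $\ell^2_s$. Two opposing effects must be balanced: solving the homological equation costs a factor $\mu_3(\kb,\lb)^{\alpha}$ from the small-divisor bound \eqref{eq:nrc}, whereas the coefficient estimate (H2) furnishes the compensating decay $\bigl(\mu_3/(\mu_3+\mu_1-\mu_2)\bigr)^N$. The crucial structural fact—established in \cite{Greb07}—is that the class of Hamiltonians obeying an (H2)-type bound is stable under the Poisson bracket and under the solution of the homological equation (at the cost of decreasing $N$ and increasing $\nu$ by controlled amounts), so that every $\chi_r$, and hence every Lie transform, again acts smoothly on a ball $B_s(r)$ once $s\ge s_0$ is large enough. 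This tame-estimate bookkeeping, not any isolated computation, is where the real difficulty sits; with our hypotheses verified, we inherit it directly from \cite[Theorem 7.2]{Greb07}.
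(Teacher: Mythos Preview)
Your proposal is correct and follows essentially the same route as the paper: both treat the theorem as a direct application of \cite[Theorem~7.2]{Greb07} after the hypotheses (non-resonance via Lemma~\ref{lem:non-res}, regularity (H1)--(H2) via Lemma~\ref{lem:H} and zero momentum) have been verified, and both sketch the iterative Lie-transform construction, the homological equation in coefficient form, and the identification of the resonant monomials as those with matching multisets $\{|k_i|^2\}=\{|l_i|^2\}$, which Poisson-commute with every $J_n$. Your exposition is in fact slightly more detailed than the paper's on the tame-estimate bookkeeping; the only cosmetic slip is writing the homological equation as $\{H_0,\chi_r\}+P_r=Z_r$, whereas at step $r>3$ the right-hand input $Q_r$ already depends on the previously constructed $\chi_3,\dots,\chi_{r-1}$, as the paper notes.
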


Let us recall the principle underlying the proof of this result: the construction of the transformation $\tau$ is made by induction by cancelling iteratively the polynomials of growing degree in the Hamiltonian $H_0 + P$. As $\tau$ is determined as the flow at time one of a polynomial Hamiltonian $\chi = \sum_{n \geq 3} \chi_n$, where $\chi_n$ are homogeneous polynomials of degree $n$, we are led to solve by induction homological equations of the form 
$$
\{ H_0, \chi_n\} = Z_n + Q_n
$$
where $Z_n$ is the $n$-th component of the normal form term, and $Q_n$ a homogeneous polynomial of degree $n$ depending on $P$ and   on   the terms constructed at the previous iterations. Writing the equation in terms of coefficients, this equation can be written in the form
$$
(\omega_{k_1} + \ldots + \omega_{k_p} - \omega_{l_1} - \ldots - \omega_{l_q}) \chi_{\kb\lb} = Z_{\kb\lb} + Q_{\kb\lb}
$$
where $(\kb,\lb) \in \Zc^{p}\times\Zc^q$ with $p+q=n$. Using the non-resonance condition \eqref{eq:nrc}, we see that we can solve this equation for $\chi_{kl}$ 
and set $Z_{\kb\lb} = 0$ 
without losing too much regularity (i.e., $\chi$ will satisfy (H2) for some $\nu$), except for the multi-indices $(\kb,\lb)$ having equal length $p = q$ and after permutation, $|k_1|^2 = |l_1|^2, \ldots, |k_p|^2 = |l_p|^2$. This yields that the normal form term $Z$ contains only terms of the form $
\xi_{k_1}\cdots\xi_{k_p}\overline\xi_{l_1}\cdots\overline\xi_{l_q}$ with $p=q$ and $|k_1|^2 = |l_1|^2, \ldots, |k_p|^2 = |l_p|^2$. We then check that these terms Poisson-commute with $J_n$ for all $n$, and hence
$\{ÊJ_n, Z\} = 0$ for all $n$.

The proof of Theorem \ref{theorem:superactions} can then be done using  $\Norm{\xi}{s}^2 = \sum_{n \geq 1} n^{s} J_n (\xi,\overline \xi)$ and following the proof of Corollary 4.9 in \cite{Greb07}.

\subsection{Proof of Theorem~\ref{theorem:superactions} via modulated Fourier expansions}
 
Modulated Fourier expansions in time have been developed as a technique for analysing weakly nonlinear oscillatory systems over long times, both continuous and discrete systems, in finite and infinite dimensions. There are
two ingredients:
\begin{itemize}
 \item a solution approximation over short time (the modulated Fourier expansion properly speaking)
 \item almost-invariants of the modulation system.
\end{itemize}
The technique can be viewed as embedding the original system in a larger modulation system that turns out to have a Hamiltonian/Lagran\-gian structure with an invariance property from which results on the long-time behaviour can be inferred.

Modulated Fourier expansions 
were originally introduced in \cite{HL00} to explain the long-time behaviour of numerical methods for oscillatory ordinary differential equations; see also \cite[Chap.\, XIII]{HLW06}. In \cite{CHL08,GHLW11} and \cite{GL10} they were used to study long-time properties of small solutions of nonlinear wave equations and nonlinear Schr\"odinger equations with an external potential, respectively, and in \cite{Gau10} for general classes of Hamiltonian partial differential equations. For a recent review of the technique and various of its uses see \cite{HL12}.

Let us assume for the moment that there are no resonances among the frequencies, in particular $\omega_j\ne\omega_{j'}$ also for $|j|=|j'|$. A modulated Fourier expansion of the solution $\xi$ of \eqref{eq:hammotion} is an approximation of $\xi$ in terms of products of propagators $e^{-i\omega_jt}$ for the linear equation with slowly varying coefficient functions,
\begin{equation}\label{eq:mfe}
\xi_j(t)\approx \widetilde\xi_j(t)=\sum_{\mathbf{k}} z^{\mathbf{k}}_j(\varepsilon t) e^{-i(\mathbf{k}\cdot\bfomega)t}, \qquad j\in\Zc,
\end{equation}
where the sum runs over a finite set of sequences of integers $\mathbf{k} = (\mathbf{k} (\ell))_{\ell\in\Zc}\in\Z^{\Zc}$ with finitely many nonzero entries, and where $\mathbf{k}\cdot\bfomega = \sum_{\ell\in\Zc}\mathbf{k} (\ell)\omega_\ell$. 

Inserting the ansatz \eqref{eq:mfe} into the equations of motion \eqref{eq:hammotion} and equating terms with the same exponential $e^{-i(\mathbf{k}\cdot\bfomega)t}$ leads to the \emph{modulation system}
\begin{align*}
&i\varepsilon\dot{z}^{\mathbf{k}}_j + (\mathbf{k}\cdot\bfomega) z^{\mathbf{k}}_j\\
&\qquad = \omega_jz^{\mathbf{k}}_j + \sum_{p+q\ge 2}^{\infty} \sum_{\substack{\mathbf{k}^1+\ldots+\mathbf{k}^{p}\\ -\mathbf{l}^1-\ldots-\mathbf{l}^{q}=\mathbf{k}}} \sum_{\substack{k\in\Zc^p,\, l\in\Zc^{q}\\ \Mc(k,l)=j} }P_{j,k,l} z^{\mathbf{k}^1}_{k_1}\dotsm z^{\mathbf{k}^p}_{k_p} \overline{z}^{\mathbf{l}^1}_{l_1}\dotsm \overline{z}^{\mathbf{l}^{q}}_{l_{q}}
\end{align*}
with the coefficients $P_{j,k,l}$ of (\ref{eq:nonlin}).
Modulated Fourier expansions can hence be seen as embedding the original system of equations in a larger system. The nonlinearity is the partial derivative with respect to $\overline{z}_{j}^{\mathbf{k}}$ of the modulation potential
$$
\mathcal{P}(\mathbf{z},\overline{\mathbf{z}})=
\sum_{p+q\ge 3} \sum_{\substack{\mathbf{k}^1+\ldots+\mathbf{k}^{p}\\ -\mathbf{l}^1-\ldots-\mathbf{l}^{q+1}=\mathbf{0}}} \sum_{\substack{k\in\Zc^p,\, l\in\Zc^{q+1}\\ \Mc(k,l)=0}} H_{kl} z^{\mathbf{k}^1}_{k_1}\dotsm z^{\mathbf{k}^p}_{k_p} \overline{z}^{\mathbf{l}^1}_{l_1}\dotsm \overline{z}^{\mathbf{l}^{q+1}}_{l_{q+1}}
$$
  with $\mathbf{z} = (z_j^{\mathbf{k}})_{j,\mathbf{k}}$ and   with the coefficients $H_{kl}$ of the non-quadratic part of the Hamiltonian $H(\xi,\overline{\xi})$. The modulation potential is invariant under transformations $z^{\mathbf{k}}_j\mapsto e^{i\mathbf{k}(\ell)\theta}z^{\mathbf{k}}_j$ for $\theta\in\R$ and fixed $\ell\in\Zc$.
The modulation system thus inherits the Hamiltonian structure from the original equations of motions for $\xi$, and its transformation invariance leads to  formal invariants
$$
\mathcal{I}_{\ell}(\mathbf{z},\overline{\mathbf{z}}) = \sum_{j,\mathbf{k}} \mathbf{k}(\ell) |z^{\mathbf{k}}_{j}|^2, \qquad \ell\in\Zc,
$$
of the modulation system, see \cite[Sect. 3.1]{Gau10}.   These formal invariants form the cornerstone for the study of long time intervals. 

On a short time interval of length $\varepsilon^{-1}$ it is possible to construct an approximate solution of the modulation system in an iterative way, such that---under certain assumptions to be verified below---the ansatz \eqref{eq:mfe} describes the solution $\xi$ up to a small error,
\[
\Norm{\xi(t)-\widetilde{\xi}(t)}{s} \le C\varepsilon^{N+3} \quad\text{for $0\le t\le c\varepsilon^{-1}$}
\]
with $\widetilde{\xi}$ given by \eqref{eq:mfe} with the approximate solution of the modulation system, see \cite[Sects. 3.2--3.4]{Gau10}. The constants depend on $N$ and $s$ but not on $\varepsilon$. Along this approximate solution $\mathbf{z}$ of the modulation system, the formal invariants $\mathcal{I}_{\ell}$ then become 
almost-invariants,
\[
\sum_{\ell\in\Zc} |\ell|^{2s} \Bigl|\frac{d}{d t} \mathcal{I}_\ell(\mathbf{z}(\varepsilon t),\overline{\mathbf{z}(\varepsilon t)})\Bigr| \le C\varepsilon^{N+2},
\]
which are close to the actions $I_{\ell}(\xi,\overline{\xi})=|\xi_\ell|^2$,
\[
\sum_{\ell\in\Zc} |\ell|^{2s} \bigl| \mathcal{I}_\ell(\mathbf{z}(\varepsilon t),\overline{\mathbf{z}(\varepsilon t)}) - I_\ell(\xi(t),\overline{\xi(t)}) \bigr| \le C\varepsilon^{\frac{5}{2}}.
\]
These almost-invariants allow us to repeat the construction of modulated Fourier expansions on short time intervals of length $\varepsilon^{-1}$ and patch $\varepsilon^{-N+1}$ of those short intervals together, see \cite[Sect. 3.5]{Gau10}. On a long time interval of length $\varepsilon^{-N}$ we then get near-conservation of actions as stated in Theorem~\ref{theorem:superactions} (with actions instead of super-actions).

Compared to the above description, the modulated Fourier expansion for our problem \eqref{eq:hammotion} has some subtleties that are caused by the partial resonances $\omega_j=\omega_{j'}$ for $|j|=|j'|$. Since all sums in the nonlinearity of \eqref{eq:hammotion} involve only products of the form $\xi_{k_1}\dotsm\xi_{k_p}\overline{\xi}_{l_{1}}\dotsm\overline{\xi}_{l_{q}}$ with $k_1+\dots+k_p-l_{1}-\dots-l_{q}=j$ (by the zero momentum condition in the Hamiltonian), only modulation functions $z^{\mathbf{k}}_j$ with
\[
j = j(\mathbf{k}) = \sum_{\ell\in\Zc} \mathbf{k}(\ell)\ell
\] 
can be different from zero. Moreover, since the frequencies $\omega_j$ in \eqref{eq:hammotion} are partially resonant, $\omega_{j}=\omega_{j'}$ for $|j|=|j'|$, we can distinguish exponentials $e^{-i(\mathbf{k}^1\cdot\bfomega)t}$ and $e^{-i(\mathbf{k}^2\cdot\bfomega)t}$ only if 
\[
\mathbf{k}^1 - \mathbf{k}^2 \not\in \Bigl\{\, \mathbf{k} : \sum_{|\ell|^2=n} \mathbf{k}(\ell) = 0 \text{ for all $n\in\N$} \,\Bigr\}.
\]
For this reason, the sum in \eqref{eq:hammotion} is in our situation only over a set of representatives of sequences $\mathbf{k}$ where $j(\mathbf{k})$ or $\mathbf{k}\cdot\bfomega$ are distinguishable (in the above sense). The main consequence is that the quantities $\mathcal{I}_\ell$ from above are no longer invariants of the modulation system, but only certain sums of them:
\[
\mathcal{J}_n(\mathbf{z},\overline{\mathbf{z}})= \sum_{\ell\in\Zc : |\ell|^2=n} \mathcal{I}_\ell(\mathbf{z},\overline{\mathbf{z}}), \qquad n\in\N.
\]
Along the approximate solution of the modulation system, they are close to the corresponding sums of the actions $I_\ell$ , the super-actions $J_n$. In this way we get long-time near-conservation of super-actions as in Theorem~\ref{theorem:superactions}. 

We finally state and verify the assumptions needed for the iterative construction of modulation functions. The first lemma below summarises the assumptions on the nonlinearity in \eqref{eq:hammotion}, whereas the second lemma below deals with the non-resonance condition on the frequencies describing the linear part of \eqref{eq:hammotion}. The properties stated in these lemmas are precisely the assumptions under which Theorem~\ref{theorem:superactions} has been shown in \cite[Theorem~2.7]{Gau10}.

\begin{lemma}\label{lemma:mfe1}
The expansion \eqref{eq:nonlin} of the nonlinearity in \eqref{eq:hammotion} has the following properties.
\begin{enumerate}
\item[(i)] It fulfills the zero momentum condition
$$
P_{j,k,l}=0 \quad\text{if \ $j \ne \Mc(k,l)$}
$$
for $j\in\Zc$, $k\in\Zc^p$ and $l\in\Zc^{q}$.
\item[(ii)] There exist constants $C_{p,q,s}$ depending only on $p$, $q$, $s$ and $\rho$ such that for
$$
|P|_{j}^{p,q}(\xi^1,\dots,\xi^p,\overline{\xi}^1,\dots,\overline{\xi}^{q}) = \sum_{k\in\Zc^p,\, l\in\Zc^{q}} |P_{j,k,l}| \,\xi^1_{k_1}\dots \xi^p_{k_p} \overline{\xi}^1_{l_1}\dotsm \overline{\xi}^{q}_{l_{q}}
$$
the estimate
\begin{equation}\label{eq:algebra}
\Norm{|P|^{p,q}(\xi^1,\dots,\xi^p,\overline{\xi}^1,\dots,\overline{\xi}^{q})}{s}
 \le C_{p,q,s} \Norm{\xi^1}{s}\dotsm\Norm{\xi^p}{s} \Norm{\overline{\xi}^1}{s}\dotsm \Norm{\overline{\xi}^{q}}{s}
\end{equation}
holds for $\xi^1,\dots,\xi^p,\overline{\xi}^1,\dots,\overline{\xi}^{q}\in \ell^2_s$ if $s>\frac{d}{2}$.
\item[(iii)] There exist  $r_0>0$ depending only on $\rho$, and $C_s$ depending in addition on $s>\frac{d}{2}$ such that
$$
\sum_{p+q\ge2} C_{p,q,s}|z|^{p+q-2}\le C_s \quad\text{for all $z\in\C$ with $|z|\le r_0$.}
$$
\end{enumerate}
\end{lemma}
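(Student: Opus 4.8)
The plan is to verify the three assertions of Lemma~\ref{lemma:mfe1} by reading off the structure of the nonlinearity \eqref{eq:nonlin} and invoking the coefficient bound of Lemma~\ref{lem:H}. Part~(i) is immediate: the expansion \eqref{eq:nonlin} is by construction a sum over multi-indices $(k,l)$ subject to the constraint $\Mc(k,l)=j$, so any coefficient $P_{j,k,l}$ with $j\ne\Mc(k,l)$ simply does not occur in the sum and is set to zero. This is nothing more than recording the zero-momentum property already established for $\widetilde H$ and inherited by $P$ because the symplectic transformation of Lemma~\ref{lemma:diag} mixes only modes with the same momentum contribution.

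For part~(ii), the plan is a standard multilinear-estimate argument for the weighted space $\ell^2_s$. First I would use that $P_{j,k,l}$ is (up to an integer factor bounded by $q+1$) a Taylor coefficient $H_{k,(l,j)}$, so Lemma~\ref{lem:H} gives a uniform bound $|P_{j,k,l}|\le M'L'^{\,p+q}$. The essential mechanism is then the zero-momentum relation $j=k_1+\dots+k_p-l_1-\dots-l_q$ together with the triangle inequality $|j|\le|k_1|+\dots+|l_q|$, which lets one distribute the weight $|j|^s$ across the factors and reduce matters to the algebra property of $\ell^2_s$ under convolution. Concretely, $\ell^2_s$ is a Banach algebra with respect to the convolution induced by the momentum condition whenever $s>d/2$, and iterating this algebra estimate $p+q-1$ times yields \eqref{eq:algebra} with a constant $C_{p,q,s}$ that is the product of $M'L'^{\,p+q}$ and a combinatorial factor from the at most $\binom{p+q}{2}$-type splittings of $|j|^s\le (p+q)^{s-1}\sum|k_i|^s$ (via the power-mean inequality $(\sum a_i)^s\le (p+q)^{s-1}\sum a_i^s$). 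The main obstacle here is bookkeeping: one must track how the constant $C_{p,q,s}$ depends on $p,q,s$ with enough precision that the series in part~(iii) converges, which is why I would keep the exponential factor $L'^{\,p+q}$ and the polynomial-in-$(p+q)$ factor explicit rather than absorbing them into a crude $p,q$-dependent constant.

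For part~(iii), the plan is to insert the explicit form of $C_{p,q,s}$ obtained in part~(ii) and sum the resulting series. If $C_{p,q,s}\le M'\,(C_sL')^{p+q}$ for a constant depending on $s$ and $d$ only (coming from the algebra constant of $\ell^2_s$ raised to the power $p+q-1$), then $\sum_{p+q\ge2}C_{p,q,s}|z|^{p+q-2}$ is dominated by a geometric series in $C_sL'|z|$, which converges and is bounded uniformly for $|z|\le r_0$ as soon as $r_0<(C_sL')^{-1}$. Thus the radius $r_0$ is dictated by the growth rate $L'$ of the Taylor coefficients, which in turn traces back through Lemma~\ref{lem:H} to the radius of convergence $\rho$ of the square-root expansion in \eqref{eq:Htilde}; I would expect $r_0$ to be comparable to $\rho$. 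The key point I would emphasize is that the convergence is exactly the analyticity of $P$ on the ball $B_s(r)$ already used in verifying (H1) in the Birkhoff approach, now phrased at the level of the majorant series $|P|^{p,q}$ rather than $P$ itself. The only genuine care needed is to ensure the polynomial prefactor in $(p+q)$ generated by the weight-distribution step does not spoil the geometric convergence, which it does not since any fixed power of $(p+q)$ is dominated by an arbitrarily small enlargement of the geometric ratio.
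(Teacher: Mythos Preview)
Your proposal is correct and follows essentially the same route as the paper: part~(i) is read off from \eqref{eq:nonlin}, part~(ii) combines the coefficient bound of Lemma~\ref{lem:H} (plus the $q+1$ multiplicity factor) with the algebra property of $\ell^2_s$ for $s>d/2$ iterated $p+q-1$ times, and part~(iii) is the resulting geometric-series convergence. The only cosmetic difference is that the paper cites \cite[Subsect.~2.6]{Gau10} for the algebra estimate with a clean $C^{p+q}$ constant (via repeated Cauchy--Schwarz), whereas you sketch the weight-distribution argument directly and pick up an extra polynomial-in-$(p+q)$ factor that you then correctly absorb into the geometric ratio; either way one arrives at $C_{p,q,s}=(q+1)ML^{p+q+1}C^{p+q}$ and the same conclusion.
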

\begin{proof}
Property (i) is obvious, see \eqref{eq:nonlin}. 

For property (ii) we recall that \eqref{eq:algebra} was verified in \cite[Subsect.~2.6]{Gau10} in the situation $P_{j,k,l}=0$ for $j\ne \Mc(k,l)$ and $P_{j,k,l}=1$ else. The proof is just a repeated application of the Cauchy-Schwarz inequality, and the corresponding constants $C_{p,q,s}$ are given by $C^{p+q}$ with $C$ depending only on $s$. In our situation here, the coefficients $P_{j,k,l}$ vanish for $j\ne \Mc(k,l)$ and  can be bounded with Lemma~\ref{lem:H}. This implies that the second property (ii) is satisfied with constants $C_{p,q,s} = (q+1) M L^{p+q+1} C^{p+q}$.

These constants satisfy (iii) with $r_0$ and $C_s$ depending only on $\rho$ and $s$. 
\end{proof}

\begin{lemma}\label{lemma:mfe2}
The frequencies $\Omega_n$ grow like $n$, $c_1n \le \Omega_n \le C_1n$ with positive constants $c_1$ and $C_1$ depending only on $\rho$.

Moreover, for all $\rho_0>0$ such that $1 + 2 \lambda \rho_0^2 > 0$ and for all positive integer $N$, there exist $s_0$ and a set of full measure $\mathcal{P}$ in the interval $(0,\rho_0]$ such that for all $s \geq s_0$ and all $\rho \in \Pc$ the following non-resonance condition holds: There exist $\varepsilon_0>0$ and $C_0$ such that for all $r\le 2N+2d+6$ and all $0<\varepsilon\le\varepsilon_0$,
$$
\Bigl( \frac{n}{n_1 \dotsm n_r} \Bigr)^{s-\frac{d+1}{2}} \, \varepsilon^{r} \le C_0 \,\varepsilon^{2N+2d+8}
$$
whenever a near-resonance
$$
|\Omega_{n} \pm \Omega_{n_1} \pm \dots \pm \Omega_{n_r}| < \varepsilon^{\frac{1}{2}}
$$
occurs with frequencies that do not cancel pairwise.
\end{lemma}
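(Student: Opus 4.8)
The plan is to treat the two assertions separately, the growth bound being elementary and the non-resonance estimate being the substance. For the growth bound I would write $\Omega_n = n\sqrt{1+2\lambda\rho^2/n}$ and note that $1+2\lambda\rho^2/n$ is a monotone sequence in $n\ge 1$ whose extreme values are $1+2\lambda\rho^2$ (at $n=1$) and $1$ (as $n\to\infty$); the hypothesis $1+2\lambda\rho_0^2>0$ keeps both positive, so $\min(1,1+2\lambda\rho^2)\le 1+2\lambda\rho^2/n\le\max(1,1+2\lambda\rho^2)$ and taking square roots gives $c_1 n\le\Omega_n\le C_1 n$ with $c_1,C_1$ depending only on $\rho$. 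The same computation yields $\Omega_n'=(n+\lambda\rho^2)/\Omega_n>1$ for every $n\ge 1$ (since $n^2+2n\lambda\rho^2=(n+\lambda\rho^2)^2-\lambda^2\rho^4<(n+\lambda\rho^2)^2$), a fact I will reuse.

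For the non-resonance estimate I would first convert the near-resonance hypothesis into largeness of $\mu_3$. Group the indices $n,n_1,\dots,n_r$ according to the sign carried in $\Omega_n\pm\Omega_{n_1}\pm\dots\pm\Omega_{n_r}$ into a positive block of size $p$ and a negative block of size $q$, so $p+q=r+1\le 2N+2d+7$. If one block were empty the combination would exceed $c_1>\varepsilon^{1/2}$ (for $\varepsilon_0$ small), so no near-resonance could occur; hence $p,q\ge 1$ and Lemma~\ref{lem:non-res}, applied with its parameter equal to $2N+2d+7$ and producing $\alpha$, $\gamma$ and the full-measure set $\Pc$, gives for non-pairwise-cancelling frequencies
\[
\varepsilon^{1/2} > |\Omega_n \pm \Omega_{n_1} \pm \dots \pm \Omega_{n_r}| \ge \gamma\,\mu_3^{-\alpha}, \quad\text{hence}\quad \mu_3 \ge \gamma^{1/\alpha}\varepsilon^{-1/(2\alpha)},
\]
where $\mu_3=\mu_3(n,n_1,\dots,n_r)$ is the third-largest index. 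I would then record the combinatorial identity that, with $\mu_1\ge\mu_2\ge\dots$ the sorted indices,
\[
\frac{n}{n_1 \dotsm n_r} = \frac{n^2}{\mu_1 \mu_2 \dotsm \mu_{r+1}} \le \frac{\mu_1^2}{\mu_1\mu_2\mu_3} = \frac{\mu_1}{\mu_2\mu_3},
\]
using $n\le\mu_1$ and $\mu_i\ge 1$. Everything thus reduces to bounding $\mu_1/(\mu_2\mu_3)$.

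The heart of the proof is to show that a near-resonance forces the two largest indices to be comparable, $\mu_1\le\mu_2+\kappa\mu_3$ with $\kappa$ depending only on $r$ and $\rho$. I would split on the signs $s_1,s_2$ carried by $\Omega_{\mu_1},\Omega_{\mu_2}$, noting that the remaining $r-1$ frequencies contribute at most $(r-1)C_1\mu_3$ in modulus by the growth bound. If $s_1=s_2$, then $\Omega_{\mu_1}+\Omega_{\mu_2}\ge c_1\mu_1$, and $\varepsilon^{1/2}\ge c_1\mu_1-(r-1)C_1\mu_3$ gives $\mu_1\le C(r)\mu_3$. If $s_1\ne s_2$, then the mean value theorem together with $\Omega_n'>1$ gives $\Omega_{\mu_1}-\Omega_{\mu_2}\ge\mu_1-\mu_2$, and $\varepsilon^{1/2}\ge(\mu_1-\mu_2)-(r-1)C_1\mu_3$ gives $\mu_1\le\mu_2+C(r)\mu_3$. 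In either case, using $\mu_2\ge\mu_3\ge 1$ and $\varepsilon^{1/2}\le 1\le\mu_3$,
\[
\frac{n}{n_1\dotsm n_r} \le \frac{\mu_1}{\mu_2\mu_3} \le \frac{\mu_2 + \kappa\mu_3}{\mu_2\mu_3} \le \frac{1 + \kappa}{\mu_3} \le (1+\kappa)\gamma^{-1/\alpha}\,\varepsilon^{1/(2\alpha)}.
\]
This sign analysis, controlling how far $\mu_1$ can exceed $\mu_2$ under a near-resonance, is the step I expect to be the main obstacle; it is precisely where the frequency asymptotics \eqref{eq:freqasymptotic}, distilled here into $\Omega_n'>1$, are needed.

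To conclude I would raise the last estimate to the power $\beta=s-(d+1)/2>0$ and multiply by $\varepsilon^r$:
\[
\Bigl(\frac{n}{n_1\dotsm n_r}\Bigr)^{s - (d+1)/2}\varepsilon^r \le \bigl((1+\kappa)\gamma^{-1/\alpha}\bigr)^{\beta}\,\varepsilon^{\,\beta/(2\alpha) + r}.
\]
Since $r\ge 0$ and $\varepsilon\le\varepsilon_0\le 1$, it suffices to pick $s_0$ so large that $\beta/(2\alpha)=(s_0-(d+1)/2)/(2\alpha)\ge 2N+2d+8$, i.e.\ $s_0\ge (d+1)/2+2\alpha(2N+2d+8)$ with $\alpha=\alpha(2N+2d+7)$; the exponent of $\varepsilon$ is then at least $2N+2d+8$. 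The prefactor, maximized over the finitely many admissible $r\le 2N+2d+6$, is a finite constant $C_0$ depending on $s$, $N$, $d$ and $\rho$, and the asserted inequality follows for all $0<\varepsilon\le\varepsilon_0$.
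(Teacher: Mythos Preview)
Your proof is correct and follows the same route the paper indicates: derive the required near-resonance bound from Bambusi's condition (Lemma~\ref{lem:non-res}) by first lower-bounding $\mu_3$ via the small divisor estimate, then controlling $\mu_1/\mu_2$ through the frequency asymptotics, and finally choosing $s_0$ large enough to absorb the resulting power of $\varepsilon$. The paper itself does not spell this out but simply cites \cite[Lemma~1]{CHL08}, where exactly this reduction is carried out; your argument is essentially a self-contained version of that lemma. One small point: you should remark that for $r\le 1$ no non-cancelling near-resonance can occur (the case $r=1$ with opposite signs gives $|\Omega_n-\Omega_{n_1}|\ge |n-n_1|\ge 1$ by your derivative estimate), so that $\mu_3$ is always well defined when you invoke it.
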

\begin{proof}
The asymptotic growth behaviour of the frequencies is obvious, and the non-resonance condition is implied by the non-resonance condition of Lemma~\ref{lem:non-res} as shown in \cite[Lemma 1]{CHL08}.
\end{proof}

\subsection{Proof of the orbital stability (\ref{eq:orb})}
We have
\[
\inf_{\varphi\in\mathbb{R}} \| e^{-i m \cdot \bullet}u(\bullet,t) -  e^{i\varphi} u_{m}(0) \|_{H^s}^2 = \bigl| |u_m(t)| - |u_m(0)| \bigr|^2 + \| e^{-i m \cdot \bullet}u(\bullet,t) -  u_{m}(t) \|_{H^s}^2,
\]
and by the conservation of the $L^2$ norm
\begin{align*}
\bigl| |u_m(t)| - |u_m(0)| \bigr|^2 &\le \bigl| |u_m(t)|^2 - |u_m(0)|^2 \bigr| = \Bigl| \sum_{m\ne j\in\mathbb{Z}^d} |u_j(0)|^2 - \sum_{m\ne j\in\mathbb{Z}^d} |u_j(t)|^2 \Bigr|\\
 &\le \max \bigl( \| e^{-i m \cdot \bullet}u(\bullet,0) -  u_{m}(0) \|_{L^2}^2 , \| e^{-i m \cdot \bullet}u(\bullet,t) -  u_{m}(t) \|_{L^2}^2 \bigr).
\end{align*}
The estimate (\ref{eq:orb}) thus follows from Theorem~\ref{theorem:main}.

\end{document}